\documentclass[unnumsec,webpdf,modern,medium]{oup-authoring-template}

\onecolumn 

\usepackage[all,cmtip]{xy}
\usepackage[enableskew]{youngtab}

\newcommand{\C}{\mathbb{C}}
\newcommand{\R}{\mathbb{R}}

\newcommand{\N}{\mathbb{N}}
\newcommand{\flo}[1]{\left\lfloor\frac{#1}{2}\right\rfloor}
\newcommand{\Lor}{\overset{\circ}{L}}

\DeclareMathOperator{\Ann}{Ann}

\newcommand{\compmm}[1]{{\color{orange}#1}}

\theoremstyle{theorem}
\newtheorem{theorem}{Theorem}[section]

\newtheorem{lemma}[theorem]{Lemma}

\newtheorem{fact}[theorem]{Fact}

\theoremstyle{definition}

\newtheorem{remark}[theorem]{Remark}
\newtheorem{example}[theorem]{Example}

\begin{document}
\journaltitle{Journal Title Here}
\DOI{DOI HERE}
\copyrightyear{2025}
\pubyear{2025}
\access{Advance Access Publication Date: Day Month Year}
\appnotes{Paper}

	\title[Corrigendum]{Corrigendum to ``Higher Lorentzian polynomials, higher Hessians, and the Hodge-Riemann relations for graded oriented Artinian Gorenstein algebras in codimension two'', [International Mathematics Research Notices, Volume 2025, Issue 13, July 2025]}

\author[1]{Pedro Macias Marques}

\author[2,$\ast$]{Chris McDaniel}

\author[3]{Alexandra Seceleanu}	

\authormark{Macias Marques, McDaniel, Seceleanu}

\address[1]{\orgdiv{Departamento de Matem\'{a}tica, ECT, CIMA, IIFA}, \orgname{Universidade de \'{E}vora}, \orgaddress{\street{Rua Rom\~{a}o Ramalho, 59}, \postcode{P--7000--671 \'{E}vora}, \country{Portugal}},
pmm@uevora.pt}

\address[2]{\orgdiv{Department of Mathematics}, \orgname{Endicott College}, \orgaddress{\street{376 Hale St Beverly}, \postcode{MA 01915}, \country{USA}},
cmcdanie@endicott.edu}

\address[3]{\orgdiv{Department of Mathematics}, \orgname{University of Nebraska-Lincoln}, \orgaddress{\street{210 Avery Hall, 1144 T St, Lincoln}, \postcode{NE 68588}, \country{USA}}, aseceleanu@unl.edu
}

\corresp[$\ast$]{Corresponding author. \href{email:cmcdanie@endicott.edu}{cmcdanie@endicott.edu}}

%\email{pmm@uevora.pt}
	
%\email{cmcdanie@endicott.edu}
	
%\email{aseceleanu@unl.edu}

\abstract{A homogeneous bivariate $d$-form defines an $(i+1)$-rowed Toeplitz matrix for each $i$ between $0$ and $d$.  We use Hodge theory and Schur polynomials to prove that if the $(i+1)$-rowed Toeplitz matrix of a form is totally nonnegative, then so is the $i$-rowed one.  This fixes a gap in the main result of paper above.
	%We fix a gap in the proof of \cite[Theorem 2, Theorem 4.21]{MMS} and give other minor corrections from our paper \cite{MMS}.
	}     

\keywords{Hodge-Riemann relation, Lorentzian polynomial, Schur polynomial, Toeplitz matrix, totally nonnegative matrix, totally positive matrix}

\maketitle

\textbf{MSC 2020 classification}: Primary: 05E05, 15B05, 15B35

\section{Introduction}
Shortly after its publication, we realized there was gap in our proof of Theorem 2 (or Theorem 4.21) in \cite{MMS}; in this corrigendum, we fix that gap and give a few other minor corrections.  We shall adhere to the same terminology as in \cite{MMS} for the most part, although to properly explain and repair our gap, we shall need the following notation. Fix positive integers $m,n$ satisfying $1\leq m\leq n$, and set $i=m-1$ and $n=d-i+1$.  We say that an $m\times n$ Toeplitz matrix $A=\phi^i_d(F)\in\mathcal{T}(m,n)$
%, we define its \emph{Toeplitz derivative} to be the $(m-1)\times (n+1)$ Toeplitz matrix $\partial A=\phi^{i-1}_d(F)\in\mathcal{T}(m-1,n+1)$.  Then we say that $A$ 
is \emph{strongly totally positive}, respectively \emph{strongly totally nonnegative}, if, for each $j$ with $0\leq j\leq i$,  the Toeplitz matrix $\phi^j_d(F)$ is totally positive, respectively totally nonnegative.  %In other words, $\phi^i_d(F)$ is strongly totally positive or nonnegative if $\phi^j_d(F)$ is totally positive or nonnegative for all $0\leq j\leq i$.  
It is relatively straightforward to see that strongly totally positive and totally positive are actually equivalent, so that in the positive case, the adjective strongly is redundant (Lemma \ref{lem:super}).  The gap in our original proof of \cite[Theorem 4.21]{MMS}, is that we had assumed that the same thing is true in the nonnegative case; it is, as it turns out, but this is far from obvious.  The proof given here relies on the theory of Schur polynomials, and in particular the Littlewood-Richardson rule.
In this corrigendum, we prove the following results, which together, constitute a complete proof of \cite[Theorem 4.21]{MMS}.
\begin{theorem}
	\label{thm:Lorentzian2}
	Let $F\in Q_d$ be a homogeneous $d$-form, and fix $0\leq i\leq \flo{d}$.	The following are equivalent.
	\begin{enumerate}
		\item $F$ is $i$-Lorentzian
		\item $\phi^i_d(F)$ is strongly totally nonnegative.
		\item $A_F$ satisfies mixed HRR$_i$ on the standard open convex cone of linear forms
		$$U=\left\{ax+by \ | \ (a,b)\in\R^2_{>0}\right\}.$$
	\end{enumerate}
\end{theorem}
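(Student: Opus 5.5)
The plan is to go through the strict (open) versions of the three conditions, for which \cite{MMS} already supplies the equivalences, and then pass to closures. I recall three facts from \cite{MMS}. First, $F$ is $i$-Lorentzian exactly when $F$ is a limit of strictly $i$-Lorentzian forms. Second, strictly $i$-Lorentzian is equivalent to $\phi^i_d(F)$ being totally positive. Third, it is also equivalent to $A_F$ satisfying mixed HRR$_i$ strictly on $U$. The latter says that, for each $0\leq j\leq i$ and all $L_1,\dots ,L_{d-2j}\in U$, the mixed Hodge--Riemann form on $(A_F)_j$ has the prescribed signature. By Lemma \ref{lem:super}, total positivity of $\phi^i_d(F)$ is the same as strong total positivity. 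So the set $\mathcal P_i$ of forms with $\phi^i_d(F)$ strongly totally positive is exactly the set of strictly $i$-Lorentzian forms, and also the set where strict mixed HRR$_i$ holds on $U$.

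\textbf{Step 1: (1)$\Leftrightarrow$(3).} Mixed HRR$_i$ on $U$ is a family of closed conditions, namely semidefiniteness of the Hodge--Riemann forms up to the prescribed sign for every $j\leq i$. Hence it holds on the closure $\overline{\mathcal P_i}$, which is the set of $i$-Lorentzian forms. For the converse I would translate the HRR condition on $(A_F)_j$, via the higher Hessian and Cauchy--Binet, into nonnegativity of the minors of $\phi^j_d(F)$ evaluated along $U$. This is the translation already used in \cite{MMS} in the strict case. It shows that (3) is equivalent to $\phi^j_d(F)$ being totally nonnegative for every $j\le i$, that is, to (2). So (3)$\Leftrightarrow$(2), and it remains to prove (1)$\Leftrightarrow$(2).

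\textbf{Step 2: (1)$\Rightarrow$(2).} Total nonnegativity of each $\phi^j_d(F)$ is a closed condition. The set of $F$ with $\phi^i_d(F)$ strongly totally nonnegative is therefore closed. It contains $\mathcal P_i$, hence also $\overline{\mathcal P_i}$, which is the set of $i$-Lorentzian forms.

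\textbf{Step 3: (2)$\Rightarrow$(1).} This is a density statement: every $F$ with $\phi^i_d(F)$ strongly totally nonnegative must be a limit of forms in $\mathcal P_i$. I expect this to be the main obstacle. I would first try a perturbation $F_t=F\ast E_t$ with $E_t\to$ identity. Here $\ast$ is a degree-preserving operation whose effect on Toeplitz matrices factors as $\phi^j_d(F_t)=\phi^j_d(F)\,M_t$ (or $M_t'\,\phi^j_d(F)\,M_t$) for totally positive matrices $M_t$. Natural candidates are the substitution $(x,y)\mapsto(x+ty,\,y+tx)$, or the Hadamard-type multiplication by the coefficients of $(x+y)^d$ in the divided-power normalization of $Q_d$. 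If such a factorization holds, the Cauchy--Binet formula shows that every minor of $\phi^j_d(F_t)$ is a nonnegative combination of minors of $\phi^j_d(F)$. Moreover, a minor of $\phi^j_d(F_t)$ is strictly positive as soon as some maximal-rank minor of $\phi^j_d(F)$ in the relevant rows is nonzero.

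Strictness fails exactly when $\phi^j_d(F)$ is rank deficient. In that case I would also add a small multiple of a form $G\in\mathcal P_i$, for instance $(x+y)^d$. Adding preserves total nonnegativity only up to controlling cross terms of minors. I would control those cross terms by expanding each minor of the Toeplitz matrix as a Schur-type polynomial in the roots (or coefficients) and applying the Littlewood--Richardson rule. The aim is to show that every cross term is a nonnegative combination of minors of the $\phi^j_d(F)$ for $j\le i$ and of minors of $\phi^j_d(G)$. This is precisely where the strong hypothesis is used, since the lower Toeplitz matrices appear in the expansion. If that works, then $F+\epsilon G\in\mathcal P_i$ for all small $\epsilon>0$, which gives (1).
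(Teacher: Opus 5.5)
Your Step 2 and the (2)$\Rightarrow$(3) half of Step 1 match the paper. For (2)$\Rightarrow$(3) you still need strictness: for $j\le s(F)-1$ the matrix $\phi^j_d(F)$ has full rank, so some maximal minor in the expansion of Lemma \ref{lem:Plucker} is positive. But the proposal has a genuine gap: nothing in it gets you out of (3), and the density mechanism in Step 3 fails.

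\textbf{(3)$\Rightarrow$(2) is not a translation.} Mixed HRR$_i$ is a definiteness condition: it includes positivity of the mixed Hessian determinants. So it is not a closed condition, and ``it holds on the closure'' needs an argument. More importantly, by Lemma \ref{lem:Plucker}, all that (3) gives you is that, for $j\le\min\{i,s(F)-1\}$, the combination $\sum_K\Delta_K(\phi^j_d(F))\,\Delta_{(K+j)J}(W_{d-2j})$ is positive on the positive cone. This falls short of (2) in three ways:
\begin{enumerate}
\item The path polynomials share monomials (Example \ref{ex:WPM}), so you learn nothing about the signs of individual maximal minors.
\item You learn nothing about non-maximal minors.
\item You learn nothing at all about $\phi^j_d(F)$ for $s(F)\le j\le i$. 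There are no primitive classes there, yet total nonnegativity must still be proved.
\end{enumerate}
Note also that positivity of consecutive minors upgrades to total positivity by Fekete's criterion (as in Lemma \ref{lem:super}), but there is no such upgrade for nonnegativity. That is exactly the gap this corrigendum repairs.

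\textbf{Step 3 fails as proposed.} First, $(X+Y)^d\notin\mathcal P_i$ for $i\ge1$, since its Toeplitz matrices are all-ones. Second, and more seriously, no fixed $G$ works. Take $F=X^d$, which satisfies (2) for every $i$, and take $i=1$. Your substitution gives $F_t=(X+tY)^d$, with normalized coefficients $c_k=t^{d-k}$. Let $G$ have normalized coefficients $g_k$ with $\phi^1_d(G)$ totally positive. The minor of $\phi^1_d(F_t+\epsilon G)$ on columns $0$ and $d-1$ is
\[
\epsilon\left(-g_0+tg_1+t^{d-1}g_{d-1}-t^dg_d\right)+\epsilon^2\left(g_1g_{d-1}-g_0g_d\right),
\]
which is negative for all sufficiently small $t\ge0$ and $\epsilon>0$. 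So the cross terms genuinely have the wrong sign, and no Littlewood--Richardson bookkeeping can rescue them. In the paper, the Schur machinery is used only for Theorem \ref{thm:STN=TN}, not here.

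The missing idea is to raise the rank by exactly one, using a perturbation whose effect on every minor is a single signed cofactor, and to induct on the Sperner number. The paper proves (3)$\Rightarrow$(1) this way.
\begin{enumerate}
\item Set $G_t=F(X+tY,tX+Y)$. Then $A_{G_t}$ satisfies mixed HRR on the closed cone $\overline U$. By \cite[Theorem 1]{MMS} and Lemmas \ref{lem:super} and \ref{lem:TPs}, $\phi^j_d(G_t)$ is totally positive for $j<s$, and TP$_s$ and totally nonnegative for $s\le j\le i$.
\item Set $H_{t,u}=G_t+(-1)^suY^d$. This changes a single corner entry of each $\phi^j_d$, so each $k\times k$ minor changes by $(-1)^{s+k-1}u$ times one complementary minor. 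Hence minors of size $\le s$ stay positive for small $u$, minors of size $s+1$ become $u$ times positive $s\times s$ minors, and larger minors stay zero.
\item Consequently $s(H_{t,u})=s+1$, and $A_{H_{t,u}}$ satisfies mixed HRR$_i$ on $U$ by Lemma \ref{lem:Plucker}. The induction hypothesis makes $H_{t,u}$ $i$-Lorentzian, and $H_{t,u}\to F$.
\end{enumerate}
The implication (3)$\Rightarrow$(2) is then obtained only indirectly, through (1)$\Rightarrow$(2).
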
 	

\begin{theorem}
	\label{thm:STN=TN}
	With notation as in Theorem \ref{thm:Lorentzian2}:
	If $\phi^i_d(F)$ is totally nonnegative, then $A_F$ satisfies the mixed HRR$_i$ on $U$.
\end{theorem}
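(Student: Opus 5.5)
The plan is to reduce Theorem~\ref{thm:STN=TN} to the strong version already available from the original paper, namely the implication (2)$\Rightarrow$(3) of Theorem~\ref{thm:Lorentzian2}. That implication was proved correctly in \cite{MMS} under the hypothesis that each $\phi^j_d(F)$, $0\le j\le i$, is totally nonnegative. It therefore suffices to show that total nonnegativity of $\phi^i_d(F)$ implies total nonnegativity of $\phi^{i-1}_d(F)$; iterating downward then gives strong total nonnegativity.

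For the descent step, write $F=\sum_k \binom{d}{k} a_k x^{d-k}y^k$, or whatever normalization of the coefficients $\phi^i_d$ uses, so that the entries of $\phi^j_d(F)$ are the coefficients $a_{k}$ arranged in Toeplitz form.

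\emph{Reduction to the totally positive case.} Totally positive Toeplitz matrices are dense in the totally nonnegative ones within $\mathcal{T}(m,n)$. One way to see this is to multiply $F$ by a small perturbation $(x+\epsilon y)^N$-type factor, or to use a Whitney-type approximation adapted to Toeplitz structure. It is also necessary that the perturbation stays of degree $d$, which requires care; if this fails, one can instead argue directly with minors. In the totally positive case, Lemma~\ref{lem:super} gives the descent immediately. Passing to the limit then yields nonnegativity of all minors of $\phi^{i-1}_d(F)$.

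\emph{Fallback without density.} If density is unavailable, the minors must be handled directly. A minor of a Toeplitz matrix built from the $a_k$ is a skew Jacobi--Trudi determinant, hence a skew Schur function $s_{\lambda/\mu}$ evaluated at the specialization $h_k\mapsto a_k$ with $h_k=0$ for $k>d$. The minors of $\phi^{i}_d(F)$ correspond to skew shapes fitting in a box with at most $i+1$ rows. The minors of $\phi^{i-1}_d(F)$ correspond to shapes with at most $i$ rows but allow longer rows. I would then try to express each such skew Schur function, via the Littlewood--Richardson rule, as a nonnegative combination of products of skew Schur functions that are minors of $\phi^i_d(F)$. The identity to aim for is an expansion of a product $s_{\lambda/\mu}\cdot s_{\nu}$, with $s_\nu$ an $(i+1)$-row minor that is positive, or at least the constant $a_0^{\,\cdot}$, as a nonnegative sum of $(i+1)$-row minors. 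Dividing by the positive factor would then give the result, with zero cases handled by the perturbation.

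\emph{Main obstacle.} The hard step is this positivity transfer from $(i+1)$-rowed to $i$-rowed minors. Under a specialization that is only totally nonnegative, Littlewood--Richardson positivity is not automatically preserved, because the specialization is not a ring map on all Schur functions. Concretely, the truncation $h_k=0$ for $k>d$ kills some terms but not others. I would first try to prove the descent with the elementary identity $h_k$-column shifting: an $i$-rowed minor of $\phi^{i-1}_d$ bordered by a suitable row of $\phi^i_d$ is a minor of $\phi^i_d$. This would give, via Desnanot--Jacobi/Sylvester identities, nonnegativity inductively on minor size, with the Schur-function machinery reserved for the degenerate cases where the pivot minors vanish.
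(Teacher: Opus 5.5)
Your reduction is correct: since (2)$\Rightarrow$(3) of Theorem~\ref{thm:Lorentzian21} is available, the statement is \emph{equivalent} to the descent ``$\phi^i_d(F)$ totally nonnegative $\Rightarrow$ $\phi^{i-1}_d(F)$ totally nonnegative.'' But that descent is exactly the gap in the original paper, and none of your three routes proves it.

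\textbf{Density.} It is unsupported and here circular: Remark~\ref{rem:STNN} notes that density of totally positive matrices among totally nonnegative $\phi^i_d(F)$ is obtained only as a \emph{consequence} of the corrected theorem. No cheap perturbation supplies it either.
Adding $\epsilon T$, with $T=\begin{pmatrix}t_1&t_2\\t_0&t_1\end{pmatrix}$ totally positive, to $\phi^1_2(X^2)=\begin{pmatrix}0&1\\0&0\end{pmatrix}$ gives determinant $-\epsilon t_0+O(\epsilon^2)<0$.
Whitney-type smoothing multiplies by totally positive kernels, which destroys the finite Toeplitz structure.
Multiplying $F$ by $X+\epsilon Y$ raises $d$ and acts on the normalized coefficients by $c'_m=\frac{m}{d+1}c_{m-1}+\epsilon\frac{d+1-m}{d+1}c_m$, which is not a Toeplitz convolution.

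\textbf{Bordering.} Your bordering identity fails exactly where it is needed. Deleting column $d-i+1$ (resp.\ $0$) from $\phi^{i-1}_d(F)$ leaves rows $1,\ldots,i$ (resp.\ $0,\ldots,i-1$) of $\phi^i_d(F)$. So the only new minors are those using both columns $0$ and $d-i+1$. Matching Toeplitz entries forces a uniform column shift, and the column span $d-i+1$ of these minors exceeds the width $d-i$ of $\phi^i_d(F)$. Hence no bordering places them inside $\phi^i_d(F)$.

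\textbf{Littlewood--Richardson.} It runs the wrong way for you. Lemma~\ref{lem:LRr} writes each $(r+1)\times(r+1)$ minor of $\phi^i_d(F)$ as a nonnegative combination of maximal minors of $\phi^r_d(F)$. That transfers positivity \emph{up} in the number of rows, and nonnegativity of a sum says nothing about its summands. (Any assignment of the $h_k$ is a ring map on symmetric functions, so LR identities do survive specialization; the obstruction is direction, not ring structure.)

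\textbf{What is missing.} You need a device that fixes a \emph{single} sign instead of controlling every minor of $\phi^{i-1}_d(F)$. The paper inducts on $d$.
First, $\frac1d\phi^i_{d-1}(x\circ F)$ and $\frac1d\phi^i_{d-1}(y\circ F)$ are submatrices of $\phi^i_d(F)$, so by induction $A_{x\circ F}$ and $A_{y\circ F}$ satisfy mixed HRR$_i$. This forces mixed SL$_i$ for $A_F$. Hence each permuted mixed Hessian determinant is nonvanishing, so of constant sign, on the connected positive cone, and positivity at one point suffices.
For $j<i$, specialize $Y_1=\cdots=Y_{i-j}=t$ and all other variables to $1$. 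By Lemmas~\ref{lem:Plucker} and~\ref{lem:Specialization}, the lowest $t$-coefficient is a positive combination of the nonzero $\Delta_K(\phi^j_d(F))$ with minimal $\alpha^i(K)$.
By the unitriangularity in Lemma~\ref{lem:alpha}, each such minor equals a genuine minor of $\phi^i_d(F)$ when $k_j\ge i$, since the LR terms of smaller $\alpha^i$ vanish by minimality. When $k_j<i$, it is a minor of $\phi^j_{d-1}(y\circ F)$, which is totally nonnegative by induction and Theorem~\ref{thm:Lorentzian21}.
The case $j=i$ follows from Lemma~\ref{lem:Plucker} because $\phi^i_d(F)$ then has full rank.
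The descent you wanted as input then comes out as a corollary.
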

Note that Theorem \ref{thm:Lorentzian2} together with Theorem \ref{thm:STN=TN} imply that strongly totally nonnegative and totally nonnegative are equivalent, and hence we recover \cite[Theorem 4.21]{MMS}.  
%It reminds one of the 17 camels trick \cite{MO}: in the end, we did not really need the strong version of total nonnegativity at all, but having it there seems to make our arguments work out nicely.  
This corrigendum is organized as follows.  In Section \ref{sec:Prelim} we establish some fundamental properties of Toeplitz matrices and prove that totally positive Toeplitz matrices are always strongly totally positive.  In Section \ref{sec:Schur} we review the prerequisite theory of Schur polynomials and their relation to Toeplitz minors.  In Section \ref{sec:MixedHessians}, we give a Pl\"ucker expansion formula for the mixed Hessian determinant, and identify a certain specialization of it.  In Section \ref{sec:Theorem1} we prove Theorem \ref{thm:Lorentzian2}, and in Section \ref{sec:Theorem2} we prove Theorem \ref{thm:STN=TN}.  In Section \ref{sec:Other}, we give other minor, mostly typographical, corrections to our paper \cite{MMS}.

Notation:  We shall adhere to the notational conventions of \cite{MMS} for the most part, but we shall adopt the following notation here:  Since most of our indexing will start with $0$, we shall use the notation $[n]_0=\{0,1,\ldots,n\}$, and occasionally we shall use the notation
$\binom{[n]_0}{k}$ for the set of $k$-subsets of the set $[n]_0$.  For an $(m+1)\times (n+1)$ matrix $A$, a fixed integer $k$ satisfying $1\leq k\leq \min\{m,n\}+1$, and $k$-subsets $I\in\binom{[m]_0}{k}$ and $J\in\binom{[n]_0}{k}$, 
we shall write $A_{IJ}$ for the submatrix of $A$ whose rows and columns are indexed by \(I\) and \(J\), respectively, and write $\Delta_{IJ}(A)$ for the corresponding minor determinant $\det(A_{IJ})$.
%we shall write $A_{IJ}$ for the corresponding submatrix $A$ and write $\Delta_{IJ}(A)$ for the corresponding minor determinant $\det(A_{IJ})$.  
We say that the submatrix or the minor is consecutive if both $I$ and $J$ consist of consecutive integers, and we say the submatrix or minor is initial if it is consecutive and either $0\in I$ or $0\in J$.    

As in \cite{MMS}, $\mathcal{M}(m,n)$ denotes the Euclidean space of $m\times n$ matrices, $Q=\R[X,Y]$ 
the standard graded polynomial ring in two variables,
%the polynomial ring of homogeneous $d$-forms, 
$R=\R[x,y]$ the polynomial ring of partial differential operators on $Q$, i.e. $x\circ F=\frac{\partial F}{\partial X}$ and $y\circ F=\frac{\partial F}{\partial Y}$, and for $F\in Q_d$ a homogeneous $d$-form $A_F=R/\Ann(F)$ the corresponding graded oriented Artinian Gorenstein algebra.  
For $F\in Q_d$, $s(F)=s(A_F)$, the Sperner number of $F$ is the number $s(F) = \max\{\dim_\R A_i \mid  0 \leq i \leq d \}$. For $0\leq i\leq s-1$, $\operatorname{MHess}_i(F,\mathcal{E})|_{(\underline{X},\underline{Y})}$ is the $i^{th}$ mixed Hessian matrix of $F$ with respect to the fixed monomial basis $\mathcal{E}=\left\{e^i_p=x^py^{i-p} \ | \ 0\leq p\leq i\right\}$, whose entries are polynomials of degree $d-2i$ in the $2(d-2i)$-variables $(\underline{X},\underline{Y})=(X_1,\ldots,X_{d-2i},Y_1,\ldots,Y_{d-2i})$.
The standard open cone is $U=\left\{ax+by \ | \ a,b>0\right\}\subset R_1$ and the standard closed cone is 
\({\overline{U}=\left\{ax+by \ | \ a,b\geq 0, \ (a,b) \ne (0,0)\right\}}\); 
%$\overline{U}=\left\{ax+by \ | \ a,b\geq 0, \ (a,b)=(0,0)\right\}$; 
we shall abuse notation slightly and also refer to the standard open and closed cones in the quotient algebra $A_F$ by the same notation, when it is clear from the context what we mean.  For integers $i,d$ satisfying $0\leq i\leq \flo{d}$, and $F\in Q_d$, we write $F=\sum_{k=0}^d\binom{d}{k}c_kX^kY^{d-k}$, 
we call $(c_0,\ldots,c_d)\in\R^{d+1}$ normalized coefficient sequence of \(F\), 
%we call $(c_0,\ldots,c_d)\in\R^{d+1}$ its normalized coefficient sequence, 
and define its $i^{th}$ Toeplitz matrix by 
\[
\phi^i_d(F)=\begin{pmatrix}
c_i & \cdots & c_d\\
\vdots & \ddots & \vdots\\
c_0 & \cdots & c_{d-i} \\ 
\end{pmatrix}
=\left(c_{i+q-p}\right)_{\substack{0\leq p\leq i\\ 0\leq q\leq d-i\\}}.
\]
%\[
%\phi^i_d(F)=\left(\begin{array}{cccc} c_i & c_{i+1} & \cdots & c_d\\
%	\vdots & \ddots & \ddots & \ddots\\
%	c_0 & \ddots & \ddots & \ddots\\ \end{array}\right)=\left(c_{i+q-p}\right)_{\substack{0\leq p\leq i\\ 0\leq q\leq d-i\\}}.
%\]

\section{Toeplitz Fundamentals}
\label{sec:Prelim}
%Recall that given a matrix $A\in\mathcal{M}(m,n)$, a positive integer $k$, $1\leq k\leq\min\{m,n\}$, two $k$-subsets $I\in\binom{[m]_0}{k}$ and $J\in\binom{[n]_0}{k}$, the $k\times k$ submatrix $A_{IJ}$ is obtained from $A$ using the rows indexed by $I$ and the columns indexed by $J$.  We say that the submatrix is \emph{consecutive of size $k$} if both $I$ and $J$ consist of consecutive indices, and we say that it is \emph{initial (of size $k$)} if it is consecutive and at least one of $I$ or $J$ contains the first index of $[n]$ or $[m]$ (our indexing may start at $0$ or $1$, depending on the context).  A (consecutive or initial) minor is the determinant of a (consecutive or initial) submatrix.  
The following result expounds upon \cite[Remark 4.17]{MMS}.
\begin{lemma}
	\label{lem:Properties}
	Let $F\in Q_d$ be any homogeneous $d$-form and fix $i$, with $1\leq i\leq \flo{d}$.
	\begin{enumerate}
		\item Every consecutive submatrix of the Toeplitz matrix $\phi^i_d(F)$ is also an initial submatrix.
		
		\item The set of distinct consecutive submatrices of $\phi^{i-1}_d(F)$ is equal to the set of distinct consecutive submatrices of $\phi^{i}_d(F)$ of size $\leq i$. 
	\end{enumerate}
\end{lemma}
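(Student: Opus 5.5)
The plan is to reduce everything to the observation that a consecutive submatrix of a Toeplitz matrix is determined, up to equality, by its size $k$ and by the single entry in its upper-left corner. Write $\phi^i_d(F)=(c_{i+q-p})$ with $0\leq p\leq i$ and $0\leq q\leq d-i$. A consecutive $k\times k$ submatrix with rows $p_0,\ldots,p_0+k-1$ and columns $q_0,\ldots,q_0+k-1$ has $(r,s)$ entry $c_{i+(q_0+s)-(p_0+r)}=c_{t+s-r}$, where $t=i+q_0-p_0$. It is therefore the $k\times k$ Toeplitz matrix $T_k(t):=(c_{t+s-r})_{0\leq r,s\leq k-1}$, and it depends only on the pair $(k,t)$.

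For part (1), I would take such a submatrix and slide its index window diagonally, replacing $(p_0,q_0)$ by $(p_0-\mu,q_0-\mu)$ with $\mu=\min\{p_0,q_0\}$. This preserves $t$, so it yields the same matrix $T_k(t)$. After the shift either the row set or the column set begins at $0$. The shifted window still fits inside the matrix because the indices only decrease. So it is an initial submatrix equal to the original one.

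For part (2), I would describe explicitly which pairs $(k,t)$ occur. In $\phi^i_d(F)$, a consecutive window of size $k$ exists exactly when $1\leq k\leq i+1$, since $i+1\leq d-i+1$ because $i\leq \flo{d}$. The possible values of $t=i+q_0-p_0$, with $0\leq p_0\leq i+1-k$ and $0\leq q_0\leq d-i+1-k$, run over the full integer interval
\[
k-1\leq t\leq d-k+1.
\]
The lower end comes from $p_0=i+1-k$, $q_0=0$, and the upper end from $p_0=0$, $q_0=d-i+1-k$. Every intermediate value is attained because $p_0$ and $q_0$ vary over intervals. This interval is independent of $i$.

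Now apply the same count to $\phi^{i-1}_d(F)$, whose sizes are $1\leq k\leq i$. This gives exactly the pairs $(k,t)$ with $k\leq i$ and $k-1\leq t\leq d-k+1$, which are the pairs from $\phi^i_d(F)$ restricted to size $\leq i$. Since each consecutive submatrix equals $T_k(t)$, the two sets of distinct consecutive submatrices coincide.

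The only delicate step is checking that every $t$ in $[k-1,d-k+1]$ is actually attained. This is also where the hypothesis $i\leq\flo{d}$ is used, since it guarantees that both ranges for $p_0$ and $q_0$ are nonempty for all $k\leq i+1$. The rest is bookkeeping.
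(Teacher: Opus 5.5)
Your proof is correct. Part (1) is exactly the paper's argument (slide the window diagonally by $\min\{p_0,q_0\}$), but for part (2) you take a different route.

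The paper matches windows by explicit one-step translations. A window of $\phi^{i-1}_d(F)$ is moved down one row, or left one column if it uses the last column $d-i+1$, so that it lands in $\phi^i_d(F)$. Conversely, a window of $\phi^i_d(F)$ of size $\le i$, assumed to have initial row set, is moved right one column.

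You instead classify every consecutive window by the invariant $(k,t)$, where $t$ is the index of its upper-left entry. You then observe that both matrices realize exactly the pairs with $k\le i$ and $k-1\le t\le d-k+1$. This buys you two things:
\begin{enumerate}
\item It removes the case split. It also sidesteps the paper's reduction to a window with initial row set, which is not always available in the converse direction. For example, the $1\times 1$ window $(c_0)$ of $\phi^i_d(F)$ occurs only in row $i$, so there one has to move up a row rather than right a column.
\item It gives at once the iterated form used in Lemmas~\ref{lem:super} and~\ref{lem:TPs}. For every $j\le\flo{d}$, the consecutive submatrices of $\phi^j_d(F)$ are exactly the $T_k(t)$ with $1\le k\le j+1$ and $k-1\le t\le d-k+1$, so no induction on $j$ is needed.
\end{enumerate}

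What the paper's version offers in exchange is an explicit matching of index sets, which is the picture in Figure~\ref{fig:Tslide}.
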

\begin{proof}
	For (1), fix $k$, with $1\leq k\leq i+1$, and fix consecutive $k$-subsets
	$$\begin{array}{ll}
		A= & \left\{0\leq a_0, a_0+1,\ldots,a_0+(k-1)\leq i\right\}\\
		B= & \left\{0\leq b_0,b_0+1,\ldots,b_0+(k-1)\leq d-i+1\right\}\\
		\end{array}$$
	If either $a_0=0$ or $b_0=0$, they define an initial submatrix.  Otherwise, we may assume that $a_0,b_0>0$.  Let $t=\min\{a_0,b_0\}>0$, and define the shifted sets 
	$$\begin{array}{lll}
		A'= & A-t= & \left\{0\leq a_0-t, a_0-t+1,\ldots,a_0-t+(k-1)\leq i\right\}\\
		B'= & B-t= & \left\{0\leq b_0-t,b_0-t+1,\ldots,b_0-t+(k-1)\leq d-i+1\right\}\\
	\end{array}$$
	(note either $A$ or $B$ must contain an initial index).
	Then we have 
	$$\phi^i_d(F)_{AB}=\left(c_{(b_0+s)-(a_0+r)+i}\right)_{0\leq r,s\leq k-1}=\left(c_{(b_0-t+s)-(a_0-t+r)+i}\right)_{0\leq r,s\leq k-1}=\phi^i_d(F)_{A'B'}$$
	and since $\phi^i_d(F)_{A'B'}$ is initial, (1) is proved.
	
	For (2), we first show that every consecutive submatrix of $\phi^{i-1}_d(F)$ is a consecutive submatrix of $\phi^i_d(F)$.  To this end, fix $k$, $1\leq k\leq i$, and fix two consecutive $k$-subsets $A\subset\{0,\ldots,i-1\}$ and $B\subseteq\{0,\ldots,d-i+1\}$; by (1), we may assume one of them, say $A$, is initial.  Write
	$$\begin{array}{ll}
		A= & \left\{0,1,\ldots,k-1\right\}\subseteq \left\{0,\ldots,i-1\right\}\\
		B= & \left\{0\leq b_0,b_0+1,\ldots,b_0+(k-1)\leq d-i+1\right\}\\
	\end{array}$$
	There are two cases to consider:
	
	\textbf{Case 1:}  $d-i+1\notin B$.  In this case $B\subset\{0,\ldots,d-i\}$ and since $A\subset\{0,\ldots,i-1\}$, $A'=A+1\subset\{0,\ldots,i\}$, and we have
	$$\phi^{i-1}_d(F)_{AB}=\left(c_{(b_0+s)-(r)+i-1}\right)_{0\leq r,s\leq k-1}=\left(c_{(b_0+s)-(r+1)+i}\right)_{0\leq r,s\leq k-1}=\phi^{i}_d(F)_{A'B}.$$
	
	\textbf{Case 2:} $d-i+1\in B$.  In this case, $B'=B-1\subset\{0,\ldots,d-i\}$ and, of course $A\subset\{0,\ldots,i-1,i\}$, hence
	\[
	\phi^{i-1}_d(F)_{AB}=\left(c_{(b_0+s)-(r)+i-1}\right)_{0\leq r,s\leq k-1}=\left(c_{(b_0+s-1)-(r)+i}\right)_{0\leq r,s\leq k-1}=\phi^{i}_d(F)_{AB'}.
	\]
	
	For the converse, we want to show that every consecutive submatrix of $\phi^i_d(F)$ of size $\leq i$ is also a consecutive submatrix of $\phi^{i-1}_d(F)$.  As above, fix $k$, with $1\leq k\leq i$, and fix $k$-subsets $A\subset\{0,\ldots,i\}$ and $B\subset\{0,\ldots,d-i\}$ with $A$ initial.  Then $A\subset\{0,\ldots,i-1\}$ and $B\subset\{0,\ldots,d-i\}$, and thus $B'=B+1=\{0,\ldots,d-i+1\}$, and we have 
	$$\phi^i_d(F)_{AB}=\left(c_{(b_0+s)-(r)+i}\right)_{0\leq r,s\leq k-1}=\left(c_{(b_0+s+1)-(r)+i-1}\right)_{1\leq r,s\leq k-1}=\phi^{i-1}_d(F)_{AB'},$$
	as desired.  
\end{proof} 
Intuitively, we can imagine the two Toeplitz matrices, superimposed and justified by their lower left corners, then any square submatrix of the shorter $\phi^{i-1}_d(F)$ is either already a submatrix of the taller $\phi^i_d(F)$ or else it can be translated along the constant diagonals to an identical submatrix of the taller one; see Figure \ref{fig:Tslide}.

\begin{figure}
	\begin{center}
	\begin{tikzpicture}[scale=1]
		\draw (0,0) rectangle (11,3);
		\draw (0,0) rectangle (10,4);
		\draw (5,1.8) rectangle (7.2,4);
		\draw (8.5,.3) rectangle (10.7,2.5);
		\draw[dotted] (5,1.8) -- (8.5,.3);
		\draw[dotted] (7.2,4) -- (10.7,2.5);
		\draw[dotted] (7.2,1.8) -- (10.7,.3);
		\draw[dotted] (5,4) -- (8.5,2.5);
		\node[above right] at (9,4) {$\phi_d^{i}(F)$};
		\node[right] at (11,3) {$\phi_d^{i-1}(F)$};
	\end{tikzpicture}
	\end{center}
	\caption{Translating a consecutive submatrix of $\phi^{i-1}_d(F)$ into an initial submatrix of $\phi^i_d(F)$}
	\label{fig:Tslide}
\end{figure}
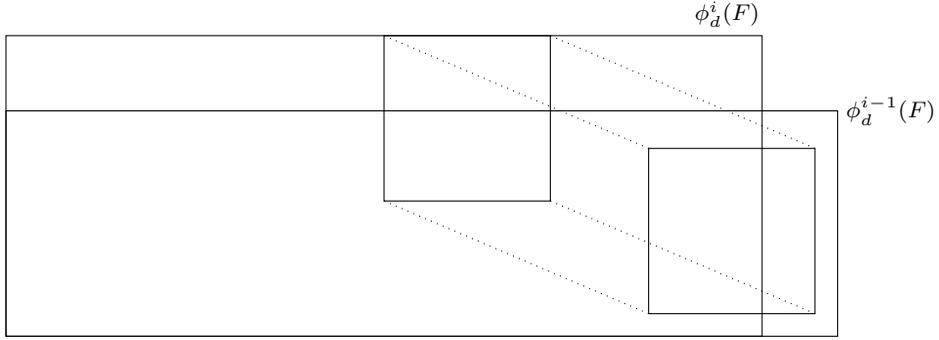  
%Lemma \ref{lem:Properties}(1) was observed in \cite[Remark 4.7]{MMS}, but (2) was only haphazardly mentioned and then misused in our faulty proof of \cite[Theorem 4.21]{MMS}. 
Using Lemma \ref{lem:Properties}, we show that the notion of strongly totally positive is redundant.
\begin{lemma}
	\label{lem:super}
	Let $F\in Q_d$ be any homogeneous $d$-form.  The Toeplitz matrix $\phi^i_d(F)$ is totally positive if and only if $\phi^j_d(F)$ is also totally positive for all $0\leq j\leq i$.
\end{lemma}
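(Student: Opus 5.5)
The reverse implication is trivial: taking $j=i$ in the hypothesis gives that $\phi^i_d(F)$ itself is totally positive. So the content is the forward direction. I will argue by downward induction on $j$, and it suffices to treat the step $j=i$ to $j=i-1$. That is, I will show that if $\phi^i_d(F)$ is totally positive, then $\phi^{i-1}_d(F)$ is totally positive. Iterating this gives all $0\leq j\leq i$, and the case $j=0$ is just positivity of the entries $c_k$.

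The key tool is the classical Fekete-type criterion: a matrix is totally positive if and only if all of its \emph{initial} minors are positive. Equivalently, it suffices to know that all of its consecutive minors are positive, since initial minors are consecutive. I will quote this as the standard result of Gasca--Pe\~na (or Fekete's lemma in the form that consecutive minors control all minors).

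The plan is then as follows.

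\textbf{Step 1.} Reduce total positivity of $\phi^{i-1}_d(F)$ to positivity of its consecutive minors, using the criterion above.

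\textbf{Step 2.} Apply Lemma \ref{lem:Properties}(2). Every consecutive submatrix of $\phi^{i-1}_d(F)$ has size at most $i$, and it coincides with some consecutive submatrix of $\phi^i_d(F)$ of the same size.

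\textbf{Step 3.} Since $\phi^i_d(F)$ is totally positive, every minor of it is positive, in particular these consecutive ones. Hence every consecutive minor of $\phi^{i-1}_d(F)$ is positive.

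\textbf{Step 4.} By the Fekete criterion, $\phi^{i-1}_d(F)$ is totally positive.

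The main point to be careful about is the applicability of the Fekete/Gasca--Pe\~na criterion to the rectangular $i\times(d-i+2)$ matrix $\phi^{i-1}_d(F)$. The criterion does hold for arbitrary rectangular matrices, for either consecutive or initial minors, so this is a citation rather than a real obstacle.

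This is exactly where the nonnegative case breaks down. Positivity of consecutive minors does not control all minors when zeros are allowed: for example, a matrix can have all consecutive minors nonnegative and yet a noncontiguous negative minor. That is why the analogous statement for totally nonnegative matrices requires the separate argument of the later sections.
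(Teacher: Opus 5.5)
Your proposal is correct and follows essentially the same route as the paper: Lemma \ref{lem:Properties}(2), applied inductively, identifies the consecutive minors of $\phi^{j}_d(F)$ with consecutive minors of $\phi^i_d(F)$ of size $\leq j+1$, and Fekete's criterion then upgrades positivity of the consecutive minors to total positivity. Your observation that the converse follows immediately by taking $j=i$ is slightly more direct than the paper's argument, which invokes Fekete's theorem a second time for that direction.
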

\begin{proof}
	Applying Lemma \ref{lem:Properties} inductively, we see that the consecutive minors of $\phi^i_d(F)$ of size $\leq j+1$ are exactly the consecutive minors of $\phi^j_d(F)$, for each $0\leq j\leq i$.  In particular, if $\phi^i_d(F)$ is totally positive, then, for each $0\leq j\leq i$ all consecutive minors of $\phi^j_d(F)$ must be positive.  By Fekete's theorem (\cite[Fact 4.6]{MMS} or \cite[Corollary 3.1.5]{FJ}), which states that a matrix whose consecutive minors are all positive must be totally positive, it follows that $\phi^j_d(F)$ is totally positive.  Conversely if $\phi^j_d(F)$ is totally positive for all $0\leq j\leq i$, it follows that every consecutive minor of $\phi^i_d(F)$ must be positive and hence, again by Fekete's theorem, $\phi^i_d(F)$ must be totally positive too.   
\end{proof}

%Our ``mistake'' in \cite{MMS} was assuming that Lemma \ref{lem:super} also holds for totally nonnegative Toeplitz matrices; this remains an open problem.  Concretely, if $\phi^i_d(F)$ is totally nonnegative, does it necessarily follow that $\phi^{i-1}_d(F)$ must also be totally nonnegative?  
%it does not, as the following example shows.
%\begin{example}
%	\label{ex:Not}
%	Let $F=X^4+Y^4$.  Then the Toeplitz matrix 
%	$$\phi^2_4(F)=\left(\begin{array}{ccc}
%		0 & 0 & 1\\ 0 & 0 & 0\\ 1 & 0 & 0\\ \end{array}\right)$$
%	is totally nonnegative, but not strongly totally nonnegative because its first Toeplitz derivate, namely 
%	$$\phi^1_4(F)=\left(\begin{array}{cccc}
%		0 & 0 & 0 & 1\\ 1 & 0 & 0 & 0\\ \end{array}\right)$$
%	is not totally nonnegative.
%	One can further see that the totally nonnegative Toeplitz matrix $\phi^2_4(F)$ cannot possibly be the limit of a sequence of totally positive Toeplitz matrices as follows.  Assume to the contrary that there is some sequence of totally positive Toeplitz matrices converging to $\phi^2_4(F)$, say $\phi^2_4(F_n)\to \phi^2_4(F)$ (according to \cite[Proposition 4.8]{MMS}, the $F_n$ would be strictly $2$-Lorentzian polynomials).  On the other hand, according to Lemma \ref{lem:super}, the Toeplitz derivates $\phi^1_4(F_n)$, must be totally positive as well, and hence their limit $\phi^1_4(F_n)\to\phi^1_4(F)$ must be totally nonnegative, but it is not!
%\end{example}

\begin{remark}
	\label{rem:STNN}
	The result \cite[Theorem 2, Theorem 4.21]{MMS} implies that every totally nonnegative Toeplitz matrix, say $\phi^i_d(F)$, must be the limit of a sequence of totally positive Toeplitz matrices, say $\phi^i_d(F_n)\rightarrow \phi^i_d(F)$.  Since $\phi^i_d(F_n)$ are all strongly totally positive, by Lemma \ref{lem:super}, it follows then that the limiting Toeplitz matrix $\phi^i_d(F)$ must be strongly totally nonnegative.  In particular, we seem to have deduced the fact that totally nonnegative Toeplitz are always strongly totally nonnegative.  The problem here is that the proof of the result \cite[Theorem 2, Theorem 4.21]{MMS} given there \emph{uses} that fact \emph{a priori}, and thus creates an unfortunate circle in this argument.  
\end{remark}
There is one more property of totally positive Toeplitz matrices that we need.  Recall that a matrix $A\in\mathcal{M}(m,n)$ is called \emph{totally positive of order $k$}, or TP$_k$, 
if all of its $j\times j$ minors are positive for all \(j\), with $1\leq j\leq k$. 
%if all of its $j\times j$ minor determinants are positive for all $1\leq j\leq k$. 
	\begin{lemma}
		\label{lem:TPs}
		Let $F\in Q_d$ be any homogeneous $d$-form with Sperner number $s=s(F)$.
		The Toeplitz matrix $\phi^{s-1}_d(F)$ is totally positive if and only if $\phi^j_d(F)$ is TP$_s$ and totally nonnegative for all 
		\(j\), with $s\leq j\leq \flo{d}$. 
		%$s\leq j\leq \flo{d}$. 
	\end{lemma}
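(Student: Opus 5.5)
The plan is to reduce everything to consecutive minors, using Lemma \ref{lem:Properties}(2) and Fekete's theorem as in the proof of Lemma \ref{lem:super}. The one extra ingredient is a rank count: all minors of size $>s$ of $\phi^j_d(F)$ vanish.

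\textbf{Rank count.} Up to reversing the order of the rows, $\phi^j_d(F)$ is the catalecticant (Hankel) matrix of $F$ in degree $j$. Hence its rank is $\dim_\R (A_F)_j$. In codimension two the Hilbert function of $A_F$ is $\dim_\R (A_F)_j=\min\{j+1,\,s,\,d-j+1\}$. So for $s-1\leq j\leq \flo{d}$ we get $\operatorname{rank}\phi^j_d(F)=s$. In particular every minor of $\phi^j_d(F)$ of size $>s$ is zero. I take this from \cite{MMS} or standard facts on codimension two Gorenstein algebras.

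\textbf{Forward direction.} Assume $\phi^{s-1}_d(F)$ is totally positive and fix $j$ with $s\leq j\leq\flo{d}$.
\begin{itemize}
\item Apply Lemma \ref{lem:Properties}(2) inductively from $i=j$ down to $i=s$. This shows that the set of consecutive submatrices of $\phi^j_d(F)$ of size $\leq s$ equals the set of consecutive submatrices of $\phi^{s-1}_d(F)$.
\item Hence all consecutive minors of $\phi^j_d(F)$ of order $\leq s$ are positive.
\item The order-$k$ version of Fekete's theorem \cite[Corollary 3.1.5]{FJ} says that positivity of all consecutive minors of order $\leq k$ implies TP$_k$. So $\phi^j_d(F)$ is TP$_s$.
\item Every larger minor vanishes by the rank count. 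All minors are therefore $\geq 0$, and $\phi^j_d(F)$ is totally nonnegative.
\end{itemize}

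\textbf{Converse.} Take $j=s$, assuming $s\leq\flo{d}$.
\begin{itemize}
\item By Lemma \ref{lem:Properties}(2) with $i=s$, every consecutive submatrix of $\phi^{s-1}_d(F)$ is a consecutive submatrix of $\phi^s_d(F)$ of size $\leq s$.
\item Since $\phi^s_d(F)$ is TP$_s$, all these minors are positive.
\item $\phi^{s-1}_d(F)$ has only $s$ rows, so its consecutive minors all have order $\leq s$. Fekete's theorem then gives that $\phi^{s-1}_d(F)$ is totally positive.
\end{itemize}
Total nonnegativity of $\phi^j_d(F)$ is not needed in this direction.

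\textbf{Main obstacle.} The delicate points are bookkeeping rather than new ideas.
\begin{itemize}
\item One must justify the rank/Hilbert function identity, which is what makes the big minors vanish.
\item One must handle the boundary case $s=\flo{d}+1$, i.e. $s-1=\flo{d}$. There the range of $j$ is empty, so the converse is vacuous. For this case I would either add the convention that this case is excluded, or read the statement with $j$ ranging over $s-1\leq j\leq\flo{d}$ together with the hypothesis at $j=s-1$.
\end{itemize}
I would first check against \cite{MMS} how Sperner numbers are normalized there, so that the case distinction is stated correctly.
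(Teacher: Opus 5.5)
Your proposal is correct and essentially matches the paper's proof: Lemma \ref{lem:Properties}(2) applied inductively plus the order-$s$ Fekete criterion gives TP$_s$, and the rank formula $\operatorname{rank}\phi^j_d(F)=\min\{j+1,s\}$ from \cite[Lemma 4.5]{MMS} kills all minors of size $>s$; for the converse, you pull consecutive minors back to $\phi^{s-1}_d(F)$ and apply Fekete. Your boundary remark is also right: when $s=\flo{d}+1$ the converse is vacuous and in fact false (e.g.\ $F=X^2+Y^2$, where $\phi^1_2(F)$ has zero entries), so the paper's argument tacitly needs $s\leq\flo{d}$, which is harmless because the lemma is only used in the forward direction, with $s\leq i\leq\flo{d}$, in the proof of Theorem \ref{thm:Lorentzian21}.
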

	\begin{proof}
	We appeal to \cite[Corollary 3.1.7]{FJ}, which implies $A\in\mathcal{M}(m,n)$ is TP$_k$, $1\leq k\leq \min\{m,n\}$ if and only if every consecutive minor of $A$ of size $\leq k$ is positive.  
	
	If $\phi^{s-1}_d(F)$ is totally positive and $j$ satisfies $s\leq j\leq \flo{d}$, then inductively, by Lemma \ref{lem:Properties}, every consecutive minor of $\phi^j_d(F)$ of size $\leq s$ must be positive, and hence, by the above cited result, $\phi^j_d(F)$ must be TP$_s$.  Moreover, since $\operatorname{rank}(\phi^j_d(F))=\min\{j+1,s\}$ (\cite[Lemma 4.5]{MMS}), it follows that every minor of $\phi^j_d(F)$ of size $\geq s+1$ is zero, and hence $\phi^j_d(F)$ must be totally nonnegative.
	
	Conversely suppose that $\phi^j_d(F)$ is TP$_s$ and totally nonnegative for all $j$, $s\leq j\leq \flo{d}$.  Then all consecutive minors of $\phi^j_d(F)$ of size $\leq s$ must be positive, and hence by Lemma \ref{lem:Properties}, all consecutive minors of $\phi^{s-1}_d(F)$ must also be positive, which implies, by Fekete's theorem \cite[Fact 4.6]{MMS}, that $\phi^{s-1}_d(F)$ must be totally positive.
\end{proof}

\section{Schur polynomials and totally nonnegative Toeplitz matrices}
\label{sec:Schur}
\subsection{A Primer on Schur Polynomials}
Our main reference for this section is Fulton's book \cite{F}; other references are \cite{LT}, \cite{Smith} and \cite{Stanley}.
For any integer partition $\lambda=(\lambda_0,\ldots,\lambda_r)$, define its Young diagram to be the left justified weakly decreasing array of boxes with $\lambda_i$ boxes in row $i$ (note we start our indexing at $i=0$), and a tableau $T$ of shape $\lambda$ is any filling of those boxes with the numbers $[n-1]_0=\{0,\ldots,n-1\}$.  We shall say that $T$ is \emph{semi-standard} if the entries of $T$ are strictly increasing down the columns and weakly increasing left to right along the rows.  The set of semi-standard Young tableaux of shape $\lambda$ is denoted by $\operatorname{SSYT}(\lambda)$, and for each $T\in\operatorname{SSYT}(\lambda)$, define its $T$-monomial 
$$\mathbf{x}^T=\prod_{i=1}^nx_i^{T(i-1)}$$
where $T(i-1)$ is the number of times $i-1$ appears in $T$.  Then the Schur polynomial corresponding to the partition $\lambda$ is the sum of all $T$-monomials where $T$ ranges over the set of semi-standard Young tableaux, i.e. 
$$s_\lambda=s_\lambda(x_1,\ldots,x_n)=\sum_{T\in\operatorname{SSYT}(\lambda)}\mathbf{x}^T.$$
A fundamental fact about Schur polynomials is that that they form a vector space basis for the subring of symmetric functions $\C[x_1,\ldots,x_n]^{\mathfrak{S}_n}$ in the polynomial ring $\C[x_1,\ldots,x_n]$; for a proof, see \cite[Proposition 6.1]{F}.

Next, given another partition $\mu=(\mu_0,\ldots,\mu_r)$ satisfying $\mu_p\leq \lambda_p$ for all $0\leq p\leq r$, define the skew Young diagram of shape $\lambda/ \mu$ by removing the boxes in the Young diagram of $\mu$ from the Young diagram of $\lambda$.  Then a skew tableau, a semi-standard skew tableau, and the skew Schur polynomials are defined analogously.  On the other hand, since the (non-skew) Schur polynomials form a basis for the symmetric functions, and since skew Schur polynomials are also symmetric, it follows that we have 
\begin{equation}
	\label{eq:LR}
	s_{\lambda/\mu}=\sum_{\nu}c^{\lambda}_{\mu\nu}\cdot s_{\nu}
\end{equation}   
for some coefficients $c^{\lambda}_{\mu\nu}\in\R$.  It turns out that the numbers $c^{\lambda}_{\mu\nu}$ are actually nonnegative integers, called the \emph{Littlewood-Richardson coefficients} for the triple $(\lambda,\mu,\nu)$.  To characterize them combinatorially, we need some more terminology.  For any tableau $T\in\operatorname{SSYT}(\lambda/\mu)$ define its row word $w_{\text{row}}(T)$ to be the word obtained by writing the entries of $T$ in a single row, starting from the bottom row, reading from left to right, then bottom to top.  A word $w=w_1,w_2,\ldots,w_s$ is called a \emph{reverse lattice word} if when read backwards to any letter, say $w_s,w_{s-1},\ldots,w_r$, the resulting sequence contains at least as many $0$'s as $1$'s, at least as many $1$'s as $2$'s and so on.  A tableau $T\in\operatorname{SSYT}(\lambda/\mu)$ is called  a \emph{Littlewood-Richardson tableau of shape $\lambda/\mu$} if besides being semi-standard, its row word is a reverse lattice word; denote the set of Littlewood-Richardson tableaux $\operatorname{LRT}(\lambda/\mu)$.  Note that the content of $T\in\operatorname{LRT}(\lambda/\mu)$, i.e. the integer sequence $\nu(T)=(\nu_0,\ldots,\nu_m)$ where $\nu_i$ is the number of $i$'s in $T$, is a partition; in this case we call $\nu(T)$ a \emph{Littlewood-Richardson partition} for the skew shape $\lambda/\mu$, and we denote the set of all Littlewood-Richardson partitions by $\operatorname{LRP}(\lambda/\mu)$, and let $\operatorname{LRT}_\nu\left(\lambda/\mu\right)$ denote the set of Littlewood-Richardson tableaux of shape $\lambda/\mu$ and content $\nu$.  The following is \cite[Proposition 3, page 64]{F}, and we refer the reader there for a proof.
\begin{fact}[Littlewood-Richardson rule]
	\label{fact:LRRule}
	The Littlewood-Richardson coefficient $c^{\lambda}_{\mu,\nu}$ equals the number of Littlewood-Richardson tableaux of shape $\lambda/\mu$ and of content $\nu$, i.e.
	$$c^{\lambda}_{\mu,\nu}=\# \operatorname{LRT}_{\nu}(\lambda/\mu).$$  
	In particular, the Littlewood-Richardson coefficient $c^{\lambda}_{\mu,\nu}$ is nonzero if and only if $\operatorname{LRT}_\nu(\lambda/\mu)\neq \emptyset$ if and only if $\nu\in\operatorname{LRP}(\lambda/\mu)$.
\end{fact}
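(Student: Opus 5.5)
Since this Fact is the classical Littlewood--Richardson rule, quoted from \cite[Proposition 3, page 64]{F}, we do not need a new proof; here is the route we would follow to reprove it, namely Fulton's jeu de taquin argument. The starting point is the tableau definition of the skew Schur polynomial,
$$s_{\lambda/\mu}=\sum_{T\in\operatorname{SSYT}(\lambda/\mu)}\mathbf{x}^T.$$
The aim is to partition $\operatorname{SSYT}(\lambda/\mu)$ into blocks, each of which contributes exactly one copy of some $s_\nu$. Then \eqref{eq:LR} holds with $c^{\lambda}_{\mu\nu}$ equal to the number of blocks of shape $\nu$.

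\textbf{Step 1: rectification.} To each $T\in\operatorname{SSYT}(\lambda/\mu)$ we attach its jeu de taquin rectification $\operatorname{Rect}(T)$. This is a straight semi-standard tableau of some shape $\nu$, and it has the same content as $T$, so $\mathbf{x}^{\operatorname{Rect}(T)}=\mathbf{x}^T$. To see that $\operatorname{Rect}(T)$ does not depend on the order of the slides, we would use Knuth equivalence.
\begin{itemize}
\item Each slide replaces the row word by a Knuth-equivalent word.
\item Every word is Knuth-equivalent to the row word of exactly one straight tableau, namely its Schensted insertion tableau.
\end{itemize}
Consequently $\operatorname{Rect}(T)$ is the Schensted $P$-tableau of $w_{\text{row}}(T)$, which is independent of the slide order.

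\textbf{Step 2: the main obstacle.} We must show that, for fixed $\lambda/\mu$ and $\nu$, the number of $T\in\operatorname{SSYT}(\lambda/\mu)$ with $\operatorname{Rect}(T)=U$ is the same for every $U\in\operatorname{SSYT}(\nu)$. Once this is known, grouping tableaux by their rectification gives
$$s_{\lambda/\mu}=\sum_\nu c_\nu\sum_{U\in\operatorname{SSYT}(\nu)}\mathbf{x}^U=\sum_\nu c_\nu\, s_\nu.$$
To prove the independence, we would use the RSK correspondence as Fulton does. For $U$ and $U'$ of the same shape $\nu$, one builds an explicit bijection between the skew tableaux rectifying to $U$ and those rectifying to $U'$. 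The bijection keeps the recording tableau of the rectification (equivalently, the dual-equivalence class of the sliding path) fixed while changing the insertion tableau. Making this bijection precise is the genuinely hard part of the argument.

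\textbf{Step 3: choosing a convenient $U$.} Since the count in Step 2 does not depend on $U$, we may take $U=U_\nu$, the tableau of shape $\nu$ whose row $p$ is filled entirely with $p$. It then remains to show that $\operatorname{Rect}(T)=U_\nu$ if and only if $w_{\text{row}}(T)$ is a reverse lattice word of content $\nu$, that is, $T\in\operatorname{LRT}_\nu(\lambda/\mu)$. We would argue as follows.
\begin{itemize}
\item The reverse lattice property is preserved by the elementary Knuth relations. One checks this directly: each relation changes only the relative order of two adjacent letters, and in a way that cannot break the counting condition.
\item For a straight semi-standard tableau, the row word is a reverse lattice word exactly when every entry of row $p$ equals $p$, i.e.\ when the tableau is $U_\nu$ for its content $\nu$.
\end{itemize}
Together with Step 1, this identifies the tableaux rectifying to $U_\nu$ with $\operatorname{LRT}_\nu(\lambda/\mu)$, so $c^\lambda_{\mu\nu}=\#\operatorname{LRT}_\nu(\lambda/\mu)$. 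The final assertion of the Fact follows at once: the coefficient is nonzero exactly when this set is nonempty, i.e.\ exactly when $\nu\in\operatorname{LRP}(\lambda/\mu)$.
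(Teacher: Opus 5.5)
Your proposal matches the paper. The paper gives no proof of this Fact and simply cites Fulton's book (Proposition 3, p.~64), and your outline is Fulton's jeu de taquin argument: rectify, show the size of each rectification fiber depends only on the shape, take the tableau whose $p$-th row consists of $p$'s, and use Knuth-invariance of the reverse lattice property.

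Two small inaccuracies:
\begin{itemize}
\item In Step 2, Fulton does not build a direct fiber-to-fiber bijection. He shows that $\{S \text{ on } \lambda/\mu : \operatorname{Rect}(S)=U_0\}$ is in bijection with the set of pairs $[T,U]$ satisfying $T\cdot U=V_0$, and that set is visibly independent of $U_0$.
\item In Step 3, Knuth-invariance of reverse lattice words relies on the third (witness) letter of each elementary relation. The fact that a relation only swaps two adjacent letters is not enough on its own.
\end{itemize}
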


Note that by Equation \ref{eq:LR} and Fact \ref{fact:LRRule}, it follows that if, for some fixed $(a_1,\ldots,a_n)\in\C^n$, $s_\nu(a_1,\ldots,a_n)\geq 0$ for all partitions $\nu\in \operatorname{LRP}(\lambda/\mu)$, then $s_{\lambda/\mu}(a_1,\ldots,a_n)\geq 0$ too.

For any fixed positive integers $n,i$, denote by $h_i=h_i(z_1,\ldots,z_n)$ the $i^{th}$ complete symmetric polynomial in $n$ variables.  One version of the fundamental theorem of invariant theory, c.f. \cite[Proposition 1, page 73]{F} or \cite[Corollary 7.6.2]{Stanley}, states that the subring of symmetric functions $\C[x_1,\ldots,x_n]^{\mathfrak{S}_n}$ is generated as an algebra by the complete symmetric polynomials, i.e.
$$\C[x_1,\ldots,x_n]^{\mathfrak{S}_n}=\C[h_1,\ldots,h_n]$$ 
Since the skew Schur functions are symmetric, it follows that they should be expressible as polynomials in the complete symmetric polynomials.  This is the (generalized) Jacobi-Trudi identity, c.f. \cite[Exercise 7, page 77]{F} or \cite[Theorem 7.16.1]{Stanley}.
\begin{fact}[Generalized Jacobi-Trudi Identity]
	\label{fact:GJT}
	For any partitions $\mu=\left(\mu_0,\ldots,\mu_r\right)\subset\lambda=\left(\lambda_0,\ldots,\lambda_r\right)$, we have 
	$$s_{\lambda/\mu}=\det\left(\left(h_{\lambda_p-\mu_q+p-q}\right)_{0\leq p,q\leq r}\right).$$
\end{fact}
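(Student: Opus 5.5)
The plan is to prove the identity bijectively, using the Lindström--Gessel--Viennot (LGV) lemma on weighted lattice paths. We use the conventions $h_0=1$ and $h_k=0$ for $k<0$. Both sides are polynomials in $x_1,\ldots,x_n$, with the letter $j-1$ carrying weight $x_j$. So it suffices to show that the determinant equals the generating function $\sum_{T\in\operatorname{SSYT}(\lambda/\mu)}\mathbf{x}^T$.

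\textbf{Step 1 (paths and complete symmetric polynomials).} Work in the lattice $\mathbb{Z}\times\{1,\ldots,n\}$ with unit steps east and north, and give an east step at height $j$ the weight $x_j$. A path from $(a,1)$ to $(b,n)$ is determined by how many east steps it takes at each height. Hence the total weight of all such paths is $h_{b-a}(x_1,\ldots,x_n)$, and this is $0$ when $b<a$. Place sources $A_q=(\mu_q-q,1)$ and sinks $B_p=(\lambda_p-p,n)$ for $0\leq p,q\leq r$. Then the path matrix is $\bigl(h_{\lambda_p-\mu_q+p-q}\bigr)_{p,q}$, up to the harmless transpose and reindexing conventions, which have to be matched with the statement's $p-q$ sign. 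One detail to check: the exponent should read $(\lambda_p-p)-(\mu_q-q)=\lambda_p-\mu_q-p+q$. I expect the statement's $+p-q$ to come from an equivalent indexing, such as reversing the row order. I will fix the orientation so that the matrix agrees exactly with the statement.

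\textbf{Step 2 (LGV).} The sources are strictly ordered along the bottom line, since $\mu_q-q$ is strictly decreasing. Likewise the sinks are strictly ordered along the top line. In this planar setup, the only permutation that admits a vertex-disjoint family of paths is the identity. The LGV lemma then says the determinant equals the weighted count of vertex-disjoint families $(P_0,\ldots,P_r)$ with $P_p\colon A_p\to B_p$. The proof of LGV is the usual sign-reversing involution: swap the tails of two paths at their first intersection point.

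\textbf{Step 3 (bijection with skew tableaux).} Row $p$ of the skew diagram has $\lambda_p-\mu_p$ boxes. Reading the heights of the east steps of $P_p$ from left to right gives a weakly increasing sequence of letters in $\{0,\ldots,n-1\}$, and we fill row $p$ with it. This gives a weight-preserving bijection between families of paths and fillings whose rows weakly increase. The remaining task, which I expect to be the main point needing care, is to check that vertex-disjointness of the family is equivalent to column strictness. The shift by $-p$ in the endpoints is exactly what makes this work. Box $(p,c)$ and box $(p+1,c)$ in the same column correspond to east steps of $P_p$ and $P_{p+1}$ whose horizontal positions differ by one. Path $P_{p+1}$ stays strictly left of $P_p$ at each height precisely when the entry below is strictly larger than the entry above. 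I would verify this by comparing, for each height $j$, the number of entries $\leq j$ in consecutive rows. Finally, rows with $\lambda_p=\mu_p$ give empty paths and contribute a factor of $1$, so padding partitions with zeros is harmless. Combining Steps 1--3 gives $s_{\lambda/\mu}=\det\bigl(h_{\lambda_p-\mu_q+p-q}\bigr)$.
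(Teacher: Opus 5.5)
Your Gessel--Viennot argument is sound. It is the standard proof of the Jacobi--Trudi identity. The paper gives no proof of this Fact and only cites Fulton and Stanley; your argument is essentially the lattice-path proof found in Stanley. With sources $A_q=(\mu_q-q,1)$, sinks $B_p=(\lambda_p-p,n)$ and east steps at height $j$ weighted by $x_j$:
\begin{itemize}
\item the total weight of paths $A_q\to B_p$ is $h_{\lambda_p-\mu_q-p+q}$;
\item only the identity permutation supports vertex-disjoint families;
\item your height-by-height count in Step 3 does reduce disjointness of $P_p$ and $P_{p+1}$ to strict increase down the columns.
\end{itemize}

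The step that cannot be carried out is the last sentence of Step 1. The discrepancy you noticed is not a matter of orientation or indexing. With the paper's convention $\lambda_0\geq\lambda_1\geq\cdots$ (and likewise for $\mu$), the identity as printed, with $h_{\lambda_p-\mu_q+p-q}$, is false. For $\lambda=(2,1)$ and $\mu=(0,0)$ it gives
\[
\det\begin{pmatrix} h_2 & h_1\\ h_2 & h_1\end{pmatrix}=0,
\]
whereas $s_{(2,1)}=h_2h_1-h_3\neq 0$ in $n\geq 2$ variables. Any transpose or permutation of rows and columns of your path matrix has determinant $\pm s_{\lambda/\mu}$, and $0$ is not $\pm s_{(2,1)}$. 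So no such rearrangement, including your suggested row reversal, can turn your path matrix into the printed one.

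What your argument proves is
\[
s_{\lambda/\mu}=\det\left(\left(h_{\lambda_p-\mu_q-p+q}\right)_{0\leq p,q\leq r}\right).
\]
This is the version the paper actually uses later. The exponents in Equation \eqref{eq:DeltaK} and in the skew-minor formula after Equation \eqref{eq:mu0} are $\nu_p-p+q$ and $\lambda_p-\mu_q+q-p-a$. The printed $+p-q$ form holds only if the parts of $\lambda$ and $\mu$ are listed in increasing order. You get it from the correct form by reversing rows and columns simultaneously ($p\mapsto r-p$, $q\mapsto r-q$), which preserves the determinant. So instead of promising to match the statement exactly, flag the sign as a typo and prove the corrected identity, which your Steps 1--3 already do.
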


We will need the following result, which is \cite[Theorem 3.15]{LT}; it also appears as a remark in \cite[page 122]{Smith}.  We omit the proof for the sake of brevity.
\begin{fact}
	\label{lem:HSurj}
	With $h_1,\ldots,h_n\in\C[x_1,\ldots,x_n]$ the complete symmetric polynomials as above, the polynomial map 
	\begin{equation}
		\label{eq:Surj}
		H\colon \C^n\rightarrow\C^n, \ H(x_1,\ldots,x_n)=\left(h_1(x_1,\ldots,x_n),\ldots,h_n(x_1,\ldots,x_n)\right)
	\end{equation}
	is surjective.
\end{fact}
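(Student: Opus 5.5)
The plan is to reduce surjectivity of $H$ to surjectivity of the analogous map built from the elementary symmetric polynomials, and then to use the fundamental theorem of algebra. Let $e_1,\ldots,e_n\in\C[x_1,\ldots,x_n]$ denote the elementary symmetric polynomials, with $e_0=h_0=1$. The basic tool is the generating function identity
$$\Bigl(\sum_{j\geq 0}h_j t^j\Bigr)\Bigl(\sum_{j=0}^n (-1)^j e_j t^j\Bigr)=\prod_{i=1}^n\frac{1}{1-x_it}\cdot\prod_{i=1}^n(1-x_it)=1.$$
Comparing coefficients of $t^k$ gives, for every $k\geq 1$,
$$\sum_{j=0}^{k}(-1)^j e_j h_{k-j}=0 \qquad (e_j=0 \text{ for } j>n).$$

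First, I would use this relation for $1\leq k\leq n$ as a unitriangular system. It expresses $(-1)^{k+1}e_k$ as $h_k$ plus a polynomial in $h_1,\ldots,h_{k-1}$ and $e_1,\ldots,e_{k-1}$. Inductively, this gives universal polynomials $P_k$ with integer coefficients such that $e_k=P_k(h_1,\ldots,h_k)$ identically. Symmetrically, the same relation yields universal polynomials $Q_k$ with $h_k=Q_k(e_1,\ldots,e_k)$. These identities hold in $\Z[x_1,\ldots,x_n]$, so they remain valid after substituting any point of $\C^n$.

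Next, fix a target $(b_1,\ldots,b_n)\in\C^n$ and define $a_k=P_k(b_1,\ldots,b_k)$ for $1\leq k\leq n$. By the fundamental theorem of algebra, the monic polynomial
$$t^n-a_1t^{n-1}+a_2t^{n-2}-\cdots+(-1)^na_n$$
factors over $\C$ as $\prod_{i=1}^n(t-\alpha_i)$. By Vieta's formulas, $e_k(\alpha_1,\ldots,\alpha_n)=a_k$ for all $k$. Then
$$h_k(\alpha)=Q_k(a_1,\ldots,a_k)=Q_k\bigl(P_1(b),\ldots,P_k(b)\bigr)=b_k.$$
The last equality holds because the recursions defining $P$ and $Q$ are mutually inverse. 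Concretely, the sequences $(1,b_1,\ldots,b_n)$ and $(1,-a_1,a_2,\ldots)$ satisfy the same triangular convolution relations, and those relations determine each sequence uniquely from the other. Hence $H(\alpha_1,\ldots,\alpha_n)=(b_1,\ldots,b_n)$, which proves surjectivity.

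I do not expect a genuine obstacle. The only point needing care is justifying that $Q_k\circ P=\mathrm{id}$ on arbitrary complex inputs, not merely on values that already come from some point of $\C^n$. The cleanest way is to argue formally in power series: define $A(t)=1-a_1t+a_2t^2-\cdots$ via the recursion, so that $A(t)\bigl(1+\sum_k b_kt^k\bigr)\equiv 1 \pmod{t^{n+1}}$. Since the inverse of a power series with constant term $1$ is unique, the truncated coefficients of $1/A(t)$ are exactly $b_1,\ldots,b_n$. These coefficients equal $h_k(\alpha)$ because $A(t)=\prod_i(1-\alpha_it)$.
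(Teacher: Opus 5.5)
Your proposal is correct and complete. The paper gives no proof of this fact. It cites \cite[Theorem 3.15]{LT} and a remark in \cite[page 122]{Smith} and omits the argument, so your proposal supplies a self-contained proof where the paper defers to the literature.

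Conceptually, you prove the factorization $H=Q\circ E$. Here $E=(e_1,\ldots,e_n)\colon\C^n\to\C^n$ is surjective by the fundamental theorem of algebra and Vieta's formulas. The map $Q=(Q_1,\ldots,Q_n)$ is a unitriangular polynomial automorphism of $\C^n$ with inverse $P=(P_1,\ldots,P_n)$. Your power-series step justifies $Q\circ P=\mathrm{id}$ on all of $\C^n$, not just on the image of $h$, and it is rigorous as written: $A(t)\bigl(1+\sum_k b_kt^k\bigr)\equiv 1 \pmod{t^{n+1}}$ forces the first $n$ coefficients of $1/A(t)=\prod_i(1-\alpha_it)^{-1}$ to be the $b_k$. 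In fact that step alone suffices: take $A(t)$ to be the truncation of $1/(1+b_1t+\cdots+b_nt^n)$ modulo $t^{n+1}$ and factor it. The polynomials $P_k$ and $Q_k$ are then only bookkeeping.

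The standard alternative in invariant theory is commutative-algebraic. $\C[x_1,\ldots,x_n]$ is integral over $\C[h_1,\ldots,h_n]=\C[x_1,\ldots,x_n]^{\mathfrak{S}_n}$, since each $x_i$ is a root of $\prod_j(T-x_j)$. The $h_k$ are algebraically independent, so lying over (equivalently, a finite dominant morphism is surjective) gives the result. That route is shorter given the background. It also applies verbatim to any homogeneous system of parameters, or to basic invariants of any finite reflection group, where no explicit root-finding description exists.

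Your route needs only the identity $\bigl(\sum_j h_jt^j\bigr)\bigl(\sum_j(-1)^je_jt^j\bigr)=1$ and the fundamental theorem of algebra. It is constructive: a preimage of $(b_1,\ldots,b_n)$ is read off from the roots of the reversed truncated reciprocal series. It also shows slightly more than the paper needs: every fiber of $H$ is a single $\mathfrak{S}_n$-orbit, because $h(\alpha)=b$ forces $e(\alpha)=P(b)$.
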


\subsection{From Toeplitz minors to Schur polynomials}
Fix $F\in Q_d$, with normalized coefficient sequence $(c_0,\ldots,c_d)\in\R^{d+1}$, and let $a=a(F)$ be the smallest index $0\leq a\leq d$, for which $c_a\neq 0$.  Then, by Lemma \ref{lem:HSurj}, there exists complex numbers $(z_1,\ldots,z_{d-a})\in\C^{d-a}$ for which 
$$h_{b-a}(z_1,\ldots,z_{d-a})=\frac{c_b}{c_a}, \ \forall a\leq b\leq d;$$
note that we can extend this rule to all $b\leq d$ since $h_{b-a}\equiv 0$ for $b-a<0$.  It follows then that for any fixed $r$, with $0\leq a\leq  r\leq \flo{d}$ and for any $r+1$-column indexing subset 
$$K=\left\{0\leq k_0<\cdots<k_r\leq d-r\right\}$$
the maximal minor of $\phi^r_d(F)$ corresponding to $K$ satisfies
\begin{equation}
	\label{eq:DeltaK}
	\Delta_K(\phi^r_d(F))=\det\left(\left(c_{r+k_q-p}\right)_{0\leq p,q\leq r}\right)=c_a^{r+1}\cdot \det\left(\left(h_{\nu_{p}-p+q}\right)_{0\leq p,q\leq r}\right)=c_a^{r+1}\cdot s_{\nu}
\end{equation}
by Fact \ref{fact:GJT}, where $\nu=(\nu_0,\ldots,\nu_r)$ is the partition defined from $K$ by 
\begin{equation}
	\label{eq:nu}
	\nu_{r-q}=(k_q-q)+(r-a), \ \ 0\leq q\leq r.
\end{equation}
To see the second equality in Equation \eqref{eq:DeltaK}, we should apply the determinant-preserving transformation
$$\left(a_{pq}\right)_{0\leq p,q\leq r}\mapsto \left(a_{(r-q)(r-p)}\right)_{0\leq p,q\leq r}.$$

More generally, for any fixed $i\geq r\geq a\geq 0$, and any fixed row and column indexing sets for $\phi^i_d(F)$,
\begin{align*}
	I = & \left\{0\leq i_0<\cdots<i_r\leq i\right\}\\
	J = & \left\{0\leq j_0<\cdots<j_r\leq d-i\right\}
\end{align*}
define the partitions $\lambda=(\lambda_0,\ldots,\lambda_r)$ and $\mu=(\mu_0,\ldots,\mu_r)$ by 
\begin{align}
	\label{eq:lambda0}
	\lambda_p= & (j_r-r)+i-(i_p-p), \ 0\leq p\leq r\\
	\label{eq:mu0}
	\mu_q = & (j_r-r)-(j_q-q), \ \ 0\leq q\leq r.
\end{align}
Then $\lambda_p-(\mu_p+a)=(i-a)-(i_p-p)+(j_q-q)\geq 0$, and then the corresponding $(r+1)\times (r+1)$ minor of $\phi^i_d(F)$ is the skew Schur function
$$\Delta_{IJ}\left(\phi^i_d(F)\right)=\det\left(\left(c_{i+j_q-i_p}\right)_{0\leq p,q\leq r}\right)=c_a^{r+1}\cdot\det\left(\left(h_{\lambda_p-\mu_q+q-p-a}\right)_{0\leq p,q\leq r}\right)=c_a^{r+1}\cdot s_{\lambda/(\mu+a)},$$
where $\mu+a$ is the partition $\mu+a=(\mu_0+a,\ldots,\mu_r+a)$.

\begin{lemma}
	\label{lem:LRr}
	Fix integers $a,d,i,r$ satisfying $0\leq a<r\leq i\leq \flo{d}$ and assume that $I=\{0\leq i_0<\cdots<i_r\leq i\}$ and $J=\{0\leq j_0<\cdots<j_r\leq d-i\}$ are given with $\lambda$ and $\mu$ defined as in Equation \eqref{eq:lambda0} and Equation \eqref{eq:mu0}.  Then every Littlewood-Richardson partition $\nu=(\nu_0,\ldots,\nu_m)\in\operatorname{LRP}(\lambda/(\mu+a))$ satisfies
	$m=r$ and $\nu_0\leq d-r-a$ and $\nu_r\geq r-a$.  In particular, every $(r+1)\times (r+1)$ minor of $\phi^i_d(F)$ can be written as a nonnegative linear combination of the maximal minors of $\phi^r_d(F)$; explicitly we have,
	\begin{equation}
		\label{eq:DeltaLR}
		\Delta_{IJ}\left(\phi^i_d(F)\right)=\sum_{\nu\in\operatorname{LRP}(\lambda/(\mu+a))}c^{\lambda}_{(\mu+a),\nu}\cdot\Delta_{K(\nu)}\left(\phi^r_d(F)\right)
	\end{equation}
	where $K(\nu)=K=\{0\leq k_0<\cdots<k_r\leq d-r\}$ is defined as in Equation \eqref{eq:nu}.	
\end{lemma}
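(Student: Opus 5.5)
The plan is to deduce everything from the skew Schur expression for $\Delta_{IJ}(\phi^i_d(F))$ computed just above. We have $\Delta_{IJ}(\phi^i_d(F))=c_a^{r+1}s_{\lambda/(\mu+a)}(z)$. Expanding via Equation \eqref{eq:LR} and Fact \ref{fact:LRRule} gives
$$\Delta_{IJ}(\phi^i_d(F))=\sum_{\nu\in\operatorname{LRP}(\lambda/(\mu+a))}c^{\lambda}_{(\mu+a),\nu}\,c_a^{r+1}s_\nu(z).$$
By Equation \eqref{eq:DeltaK}, $c_a^{r+1}s_\nu(z)=\Delta_{K(\nu)}(\phi^r_d(F))$, provided $\nu$ arises from an admissible column set $K$ via Equation \eqref{eq:nu}. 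Inverting \eqref{eq:nu} gives $k_q=\nu_{r-q}+q-(r-a)$. Thus $K(\nu)$ is a strictly increasing subset of $[d-r]_0$ exactly when $\nu$ has at most $r+1$ parts (so that $m=r$, padding with zeros if necessary), $k_0=\nu_r-(r-a)\ge 0$, and $k_r=\nu_0+a\le d-r$. So Equation \eqref{eq:DeltaLR} reduces to the three shape constraints in the statement, and the nonnegativity of the coefficients is automatic from Fact \ref{fact:LRRule}.

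For the first two constraints I will use the combinatorics of a Littlewood--Richardson tableau $T$ of shape $\lambda/(\mu+a)$ and content $\nu$.

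\textbf{Length.} By the lattice condition, the last entry of the row word is $0$. Inductively, the entry $p$ can only appear in a row of index at least $p$. Since the columns strictly increase, this bounds the number of distinct letters by the number $r+1$ of rows. Hence $m\le r$, and we set $\nu_p=0$ for the missing parts.

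\textbf{Largest part.} Each column contains at most one $0$, so $\nu_0$ is at most the number of nonempty columns of $\lambda/(\mu+a)$. From \eqref{eq:mu0} we get $\mu_r=0$, and hence $\mu+a\supseteq(a^{r+1})$. Therefore the nonempty columns lie among columns $a+1,\dots,\lambda_0$. Since $\lambda_0=(j_r-r)+i-i_0\le (d-i-r)+i=d-r$, this gives $\nu_0\le d-r-a$.

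\textbf{Smallest part.} The remaining bound $\nu_r\ge r-a$ is the step I expect to be the main obstacle. First, every row of the skew shape is long. Indeed,
$$\lambda_p-\mu_p-a=(i-a)-(i_p-p)+(j_p-p)\ge (i-a)-(i-r)=r-a,$$
using $i_p-p\le i-r$ and $j_p-p\ge 0$. The claim to prove is that if all $r+1$ rows of a skew shape have length at least $\ell$, then every LR content has $r+1$ parts, each at least $\ell$.

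My first attempt would be the conjugation symmetry $c^{\lambda}_{\kappa\nu}=c^{\lambda'}_{\kappa'\nu'}$. The condition $\nu_r\ge \ell$ says that the first $\ell$ columns of $\nu$ have length $r+1$. Equivalently, the LR tableau of shape $\lambda'/\kappa'$ with content $\nu'$ must use the letters $0,\dots,r$ in its first $\ell$ ``rows of content''. Here the long rows of $\lambda/\kappa$ become long columns of $\lambda'/\kappa'$, each of length at least $\ell$, which is the natural place to use them. Concretely, each of the $r+1$ rows of $\lambda'/\kappa'$ contains at least $\ell$ boxes, so one can track how the letters are forced column by column.

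If that bookkeeping becomes messy, I would instead argue directly. In the LR tableau, row $p$ has at least $\ell$ boxes, and its entries are at most $p$. Reading the row word from the right, I would show by induction on $p$ that letter $p$ must occupy at least $\ell$ boxes of rows $\ge p$. Otherwise, row $p$ would contain at least $\ell$ entries in $\{0,\dots,p-1\}$ beyond what column strictness allows, given the boxes of those letters already placed in the rows above it.

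Once $\nu_r\ge r-a$ is established, the set $K(\nu)$ is admissible, and Equation \eqref{eq:DeltaLR} follows by substituting Equation \eqref{eq:DeltaK} into the Littlewood--Richardson expansion.
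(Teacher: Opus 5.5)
\textbf{Verdict: there is a genuine gap in the step $\nu_r\ge r-a$.} The rest of your proposal is sound and matches the paper: the reduction of \eqref{eq:DeltaLR} to the three shape constraints via $k_q=\nu_{r-q}+q-(r-a)$, the length bound (row $p$ of an LR tableau has entries $\le p$), and the bound $\nu_0\le d-r-a$ (from $\mu_r=0$ and $\lambda_0\le d-r$).

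The general claim you set out to prove is false. You claimed that if all $r+1$ rows of a skew shape have length at least $\ell$, then every LR content has $r+1$ parts, each at least $\ell$. Two counterexamples:
\begin{enumerate}
\item $\lambda=(2,1)$, $\mu=(1)$. Both rows have length $1$, yet $s_{(2,1)/(1)}=s_{(2)}+s_{(1,1)}$, and the content $(2)$ has only one part.
\item $\lambda=(4,2)$, $\mu=(2)$. Both rows have length $2$, yet $s_{(4,2)/(2)}=s_{(4)}+s_{(3,1)}+s_{(2,2)}$, and the content $(3,1)$ has smallest part $1<2$.
\end{enumerate}
Row lengths carry no information about how the rows overlap. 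Column strictness only constrains boxes that share a column. So neither the conjugation bookkeeping nor your row-by-row induction can work from row lengths alone. Your inequality $\lambda_p-\mu_p-a\ge r-a$ is true, but it is the wrong one to extract.

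The missing idea is to compare $\lambda_p$ with $\mu_0$ rather than with $\mu_p$:
\[
\lambda_p-(\mu_0+a)=(i-a)-(i_p-p)+j_0\ \ge\ (i-a)-(i-r)=r-a,\qquad 0\le p\le r.
\]
Since $\mu_q\le\mu_0$ for all $q$, the $r-a$ columns immediately to the right of column $\mu_0+a$ lie inside the skew shape $\lambda/(\mu+a)$ in every row. So they are full columns with $r+1$ boxes each.

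In such a column the entries strictly increase down the column. The entry in row $p$ is at most $p$, which is the fact you already derived. Together these force the column to read $0,1,\ldots,r$ from top to bottom. Hence every LR tableau contains at least $r-a$ copies of $r$, giving $\nu_r\ge r-a\ge 1$, and in particular $m=r$. This is exactly the paper's argument.
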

\begin{proof}
	Fix a Littlewood-Richardson partition $\nu=(\nu_0,\ldots,\nu_m)\in\operatorname{LRP}(\lambda/(\mu+a))$, and let $T\in\operatorname{LRT}_\nu\left(\lambda/(\mu+a)\right)$ be any Littlewood-Richardson tableau of shape $\lambda/(\mu+a)$ and content $\nu$.
	Note that $$\lambda_p-(\mu_0+a)=(i-a)-(i_p-p)+j_0\geq r-a, \ \forall 0\leq p\leq r.$$
	It follows that the Young diagram for the skew shape $\lambda/(\mu+a)$ must have at least $r-a$ columns with exactly $r+1$ boxes.    
	By the reverse lattice word property, the $i^{th}$ row can only contain entries $\leq i$ for all $0\leq i\leq r$.  It follows that only the numbers  from the set $\{0,\ldots,r\}$ can occur in $T$, and moreover by the column-strict condition every number from that set must be used at least once.  Therefore it follows that $m=r$, and since the last row of $T$ must contain at least $(r-a)$ $r$'s, $\nu_r\geq r-a$ as well. 
	
	Finally, since $0$'s can only be placed in the top row of any given column, and there are $\lambda_0-a=(j_r-r)+i-i_0-a\leq d-r-a$ columns in the skew shape $\lambda/(\mu+a)$, it follows that $\nu_0\leq d-r-a$.
	
	For the last statement, note that every partition $\nu'=(\nu'_0,\ldots,\nu'_r)$ satisfying $\nu_0'\leq d-2r$ corresponds uniquely to a maximal minor of $\Delta_K\left(\phi^r_d(F)\right)$ according to the rule $\nu'_{r-q}=k_q-q$.  Hence given any Littlewood-Richardson partition $\nu=(\nu_0,\ldots,\nu_r)\in\operatorname{LRP}(\lambda/(\mu+a))$, define $\nu'=(\nu'_0,\ldots,\nu'_r)$, $\nu'_i=\nu_i-(r-a)$, and the second statement follows from our computations above and Equation \eqref{eq:LR}. 
\end{proof}

\begin{remark}
	\label{rem:BD}
	It seems that a similar formula to Equation \eqref{eq:DeltaLR} was also discovered by Bump-Diaconis \cite[Lemma page 261]{BD}.  %In fact, our formula may be interpreted as a refinement of theirs in the sense that we replace their ``$\lambda+N^n$ for sufficiently large $N$'' with an explicit choice for $\lambda$ given by Equation \eqref{eq:lambda0}.  
	Our Formulas \eqref{eq:lambda0} and \eqref{eq:mu0} were borrowed from Sagan \cite[Theorem 6.3]{Sagan}.
\end{remark}

\begin{example}
	Take $d=9$, $a=1$ $r=2$ and $i=3$, so that 
	$$\phi^3_9(F)=\left(\begin{array}{ccccccc}
		c_3 & c_4 & c_5 & c_6 & c_7 & c_8 & c_9\\
		c_2 & c_3 & c_4 & c_5 & c_6 & c_7 & c_8\\
		c_1 & c_2 & c_3 & c_4 & c_5 & c_6 & c_7\\
		c_0 & c_1 & c_2 & c_3 & c_4 & c_5 & c_6\\
	\end{array}\right)$$
	and $c_0=0$ and $c_1\neq 0$.
	Define the subsets 
	\begin{align*}
		I= & \left\{0, 1, 3\right\}\\
		J= & \left\{0, 4, 6\right\}
	\end{align*}
	so that 
	$$\Delta_{IJ}\left(\phi^3_9(F)\right)=\det\left(\begin{array}{ccc}
		c_3 & c_7 & c_9\\
		c_2 & c_6 & c_8\\
		c_0 & c_4 & c_6\\
	\end{array}\right).$$
	Then according to Equations \eqref{eq:lambda0} and \eqref{eq:mu0} above, we have
	$$\lambda=(7, 7, 6), \ \ \mu=(4, 1,0) \ \Rightarrow \ \mu+a=(5,2,1).$$
	The Young diagram for the skew shape $\lambda/(\mu+a)=(7,7,6)/(5,2,1)$ is 
	$$\young(:::::\hfil \hfil ,::\hfil \hfil \hfil \hfil \hfil,:\hfil \hfil \hfil \hfil \hfil).$$
	The set of Littlewood-Richardson tableaux on $\lambda/(\mu+a)$ are given by 
	$$\young(:::::00,::00011,:01112), \young(:::::00,::00011,:01122), \young(:::::00,::00011,:11122)$$
	corresponding respectively to Littlewood-Richardson partitions
	$$\nu_1=(6,5,1), \nu_2=(6,4,2), \nu_3=(5,5,2);$$
	note that the Littlewood-Richardson coefficients all equal one here.
	These partitions, in turn correspond to the column indexing sets
	$$K_1=\{0, 5, 7\}, \ K_2=\{1, 4, 7\}, \ K_3=\{1, 5, 6\}$$
	according to Equation \eqref{eq:nu},
	which define maximal minors of the shorter Toeplitz matrix 
	$$\phi^2_9(F)=\left(\begin{array}{cccccccc}
		c_2 & c_3 & c_4 & c_5 & c_6 & c_7 & c_8 & c_9\\
		c_1 & c_2 & c_3 & c_4 & c_5 & c_6 & c_7 & c_8\\
		c_0 & c_1 & c_2 & c_3 & c_4 & c_5 & c_6 & c_7\\ \end{array}\right)$$
	Then Lemma \ref{lem:LRr} tells us that 
	$$\Delta_{IJ}(\phi^3_9(F))=c_1^{3}\cdot s_{\lambda/(\mu+a)}=c_1^3\cdot \sum_{\nu}c^{\lambda}_{(\mu+a), \nu}\cdot s_\nu=\Delta_{K_1}(\phi^2_9(F))+\Delta_{K_2}(\phi^2_9(F))+\Delta_{K_3}(\phi^2_9(F))$$
	or 
	$$\det\left(\begin{array}{ccc}
		c_3 & c_7 & c_9\\
		c_2 & c_6 & c_8\\
		c_0 & c_4 & c_6\\
	\end{array}\right)=\det\left(\begin{array}{ccc}
		c_2 & c_7 & c_9\\
		c_1 & c_6 & c_8\\
		c_0 & c_5 & c_7\\ \end{array}\right)+\det\left(\begin{array}{ccc} c_3 & c_6 & c_9\\ 
		c_2 & c_5 & c_8\\
		c_1 & c_4 & c_7\\ \end{array}\right)+
	\det\left(\begin{array}{ccc}
		c_3 & c_7 & c_8\\
		c_2 & c_6 & c_7\\ 
		c_1 & c_5 & c_6\\ \end{array}\right).$$
	%(don't forget that we are assuming that $c_0=0$ here--actually in this example, that does not seem to matter).
\end{example}

We can also reverse this construction for certain maximal minors of $\phi^r_d(F)$.  Specifically, given a partition $\nu=(\nu_0,\ldots,\nu_r)$ satisfying $i-a\leq \nu_0\leq  d-r-a$ and $\nu_r\geq r-a$, define two new partitions $\lambda=(\lambda_0,\ldots,\lambda_r)$ and $\mu=(\mu_0,\ldots,\mu_r)$ by 
\begin{align}
	\label{eq:lambda}
	\lambda_p= & \nu_0+a-\max\{(i-a)-\nu_p,0\}\\
	\label{eq:mu}
	\mu_q= & \nu_0+a-i-\max\{\nu_{r-q}-(i-a),0\}.
\end{align}
Then it is straightforward to check that $\lambda$ and $\mu$ are both partitions, and that  $\lambda_p-\mu_0-a\geq r-a$ for all $0\leq p\leq r$.  Note that the condition $i-a\leq \nu_0$ is needed so that $\mu_q\geq 0$ for all $0\leq q\leq r$.  We identify row and column indexing sets 
\begin{align*}
	I= & \{0\leq i_0<\cdots<i_r\leq i\}\\
	J= & \{0\leq j_0<\cdots<j_r\leq d-i\}
\end{align*} 
by the formulas 
\begin{align}
	\label{eq:i}
	i_p= & \max\{(i-a)-\nu_p,0\}+p, & 0\leq p\leq r\\
	\label{eq:j}
	j_q= & \max\{\nu_{r-q}-(i-a),0\}+q, & 0\leq q\leq r.
\end{align}
One can check that with these definitions, $\lambda$, $\mu$, $I$ and $J$ satisfy Equations \eqref{eq:lambda0} and \eqref{eq:mu0}. 
As above, we define the column indexing subset $K=\{0\leq k_0<\cdots,k_r=d-r\}$ by the formula
\begin{align}
	\label{eq:K}
	k_q= & \nu_{r-q}-(r-a)+q, & 0\leq q\leq r.
\end{align}
Note that by our assumption, $i-a\leq \nu_0\leq d-r-a$, it follows that $i\leq k_r\leq d-r$.

Let us define the \emph{$\alpha^i$-statistic} on the partition $\nu$ (or on the set $K$) by the formula
\begin{equation}
	\label{eq:alpha}
	\alpha^i(\nu)=\alpha^i(K)=\sum_{q=0}^r\max\{(i-a)-\nu_q,0\}=\sum_{q=0}^r\max\{(i-r)-(k_q-q),0\}.
\end{equation}
Note that for partitions $\nu=(\nu_0,\ldots,\nu_r)$ satisfying $\nu_0\geq (i-a)$, if $t$, satisfying $0\leq t\leq r$, is the largest index for which $\nu_t\geq (i-a)$, then 
\begin{equation}
	\label{eq:alpha2}
	\alpha^i(\nu)=\sum_{q=t+1}^r\left((i-a)-\nu_q\right).
\end{equation}
Alternatively, $\alpha^i(\nu)$ is the size of the complimentary partition to $\lambda$ inside the $(r+1)\times (\nu_0+a)$ rectangle; see Figure \ref{fig:LambdaMu}.
Then we can prove the following:
\begin{lemma}
	\label{lem:alpha}
	Fix integers $a,d,i,r$ satisfying $0\leq a\leq r\leq i\leq \flo{d}$, and fix $r+1$-subsets $I$, $J$ and $K$ defined by Equations \eqref{eq:i}, \eqref{eq:j}, and \eqref{eq:K} above, with $i\leq k_r\leq d-r$.  Then in the Littlewood-Richardson expansion of the minor $\Delta_{IJ}(\phi^i_d(F))$, as in Lemma \ref{lem:LRr}, the minor $\Delta_K(\phi^r_d(F))$ occurs with Littlewood-Richardson coefficient $1$ and maximal $\alpha^i(K)$, i.e.
	$$\Delta_{IJ}\left(\phi^i_d(F)\right)=\Delta_K\left(\phi^r_d(F)\right)+\sum_{\substack{\nu'\in\operatorname{LRP}(\lambda/(\mu+a))\\ \alpha^i(K')<\alpha^i(K)\\}}c^\lambda_{(\mu+a),\nu'}\cdot \Delta_{K'}\left(\phi^r_d(F)\right).$$
\end{lemma}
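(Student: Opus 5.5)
Assume first that $c_a\neq 0$. By Lemma \ref{lem:LRr} and Equation \eqref{eq:LR} we already have
$$\Delta_{IJ}\left(\phi^i_d(F)\right)=\sum_{\nu'}c^\lambda_{(\mu+a),\nu'}\,\Delta_{K(\nu')}\left(\phi^r_d(F)\right),$$
so the statement reduces to two purely combinatorial claims about Littlewood-Richardson tableaux of the skew shape $\lambda/(\mu+a)$, with $\lambda,\mu$ given by Equations \eqref{eq:lambda} and \eqref{eq:mu}. First, $c^\lambda_{(\mu+a),\nu}=1$. Second, every $\nu'\in\operatorname{LRP}(\lambda/(\mu+a))$ with $\nu'\neq\nu$ satisfies $\alpha^i(\nu')<\alpha^i(\nu)$. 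The case $a=r$ is included, since then $\nu_r\geq 0$ is automatic.

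\textbf{Step 1: describe the skew shape.} By \eqref{eq:lambda} and \eqref{eq:mu}, row $p$ of $\lambda/(\mu+a)$ runs from column $\mu_p+a$ to column $\lambda_p-1$. The complement of $\lambda$ in the $(r+1)\times(\nu_0+a)$ rectangle has row lengths $\max\{(i-a)-\nu_p,0\}$. In particular, $\alpha^i(\nu)=(r+1)(\nu_0+a)-|\lambda|$.

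\textbf{Step 2: exhibit one tableau.} I will write down an explicit tableau $T_0$ of content $\nu$: fill each column of the skew shape from the top down with $0,1,2,\dots$, shifted so that row $p$ contains exactly $\nu_p$ entries equal to $p$. I then check that $T_0$ is semistandard, that its row word is a reverse lattice word, and that its content is $\nu$. This is essentially the ``highest weight'' filling, analogous to the example above with $\nu_1$.

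\textbf{Step 3: express $\alpha^i(\nu')$ through the tableau.} For a general $T\in\operatorname{LRT}_{\nu'}(\lambda/(\mu+a))$ I rewrite $\alpha^i(\nu')$ using Equation \eqref{eq:alpha2}. It measures how far the rows $\nu'_q$ with $q>t$ fall short of $i-a$. The lattice condition and column strictness force each entry $q$ of $T$ to lie in some row $\geq q$. Hence $\nu'_q\leq\sum_{p\geq q}(\text{boxes of rows }p\geq q)$, and more precisely $\nu'$ is dominated by $\nu$. So the content of $T_0$ is the unique dominance-maximal Littlewood-Richardson content.

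I then aim to show that $\alpha^i$ strictly decreases as one moves down in dominance order among partitions of fixed size $|\lambda|-|\mu|-(r+1)a$ that satisfy the bounds from Lemma \ref{lem:LRr}. The reason: $\alpha^i(\nu')=\sum_q\max\{(i-a)-\nu'_q,0\}$ is a Schur-convex function of $\nu'$. Because $x\mapsto\max\{(i-a)-x,0\}$ is convex, the Karamata inequality gives $\alpha^i(\nu')\leq\alpha^i(\nu)$ whenever $\nu'\preceq\nu$.

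\textbf{Main obstacle: strictness and multiplicity one.} Equality in Karamata can occur when the moved boxes stay in the region $\geq i-a$, or stay entirely below it. I will rule this out using the particular shape. Any $T$ whose content differs from $\nu$ must, by Step 3's row analysis, push some entry $q$ out of row $q$ into a row $p>q$ with $\nu'_p<i-a$. Such rows are exactly the rows $p>t$ where $\lambda_p<\nu_0+a$. This strictly increases some $\nu'_p$ with $p>t$ while keeping the values $\nu'_p$ below $i-a$, so $\sum_{q>t}((i-a)-\nu'_q)$ drops.

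The same row-by-row argument, namely that row $p$ is forced to begin with $\nu_p$ copies of $p$ when the content is exactly $\nu$, shows that $T_0$ is the only tableau of content $\nu$. That gives coefficient $1$.

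Finally, I handle the case $c_a=0$ for all $a$, i.e.\ $F=0$, trivially. Multiplying through by $c_a^{r+1}$ then gives the displayed formula.
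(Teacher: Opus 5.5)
The genuine gap is the strict inequality $\alpha^i(\nu')<\alpha^i(\nu)$. Dominance plus Karamata only gives $\alpha^i(\nu')\le\alpha^i(\nu)$, and for dominated partitions in general equality really happens: with $i-a=2$, $(4,4,1)\prec(5,3,1)$ and both have $\alpha$-value $1$. So everything hinges on your ``main obstacle'' paragraph, and that paragraph is not an argument.

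\begin{itemize}
\item Nothing you have proved prevents an LR content $\nu'$ from differing from $\nu$ only in the rows $q\le t$.
\item Nothing prevents $\nu'_q<i-a$ for some $q\le t$.
\item Applying \eqref{eq:alpha2} to $\nu'$ with the same $t$ is unjustified.
\item $\nu'_p$ need not stay below $i-a$: in the paper's example $\nu=(6,5,1)$ and $t=1$, while $\nu'=(6,4,2)$ has $\nu'_2=i-a$.
\end{itemize}

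The missing idea is the paper's critical line $x=\lambda_0-(i-a)$. Every column of $\lambda/(\mu+a)$ to its right is contiguous and starts in row $0$. Column-strictness, together with the fact that row $p$ of an LR tableau contains only entries $\le p$, therefore forces row $p$ there to consist entirely of $p$'s. Row $p$ has $\min\{\nu_p,i-a\}$ boxes in that region, so every $\nu'\in\operatorname{LRP}(\lambda/(\mu+a))$ satisfies $\nu'_q\ge i-a$ for $q\le t$ and $\nu'_q\ge\nu_q$ for $q>t$.

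This gives $\alpha^i(\nu')\le\alpha^i(\nu)$ directly, with no Karamata needed. Equality holds iff $\nu'_q=\nu_q$ for all $q>t$, i.e.\ iff no entry larger than $t$ sits left of the critical line. You then still need the paper's argument that an LR tableau whose entries left of the critical line all lie in $\{0,\dots,t\}$ is unique. That tableau is the filling reading $0,1,2,\dots$ down each column, which has content $\nu$, and only then does strictness follow.

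Two further justifications are wrong, though both are repairable.

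\begin{itemize}
\item The dominance claim. The fact that each entry $q$ lies in a row $\ge q$ only bounds $\nu'_0+\cdots+\nu'_{q-1}$ from below, by the number of boxes in rows $<q$. That is the opposite direction from $\nu'\preceq\nu$. The correct reason is that a column of height $h$ contains at most $\min\{h,m+1\}$ entries from $\{0,\dots,m\}$. You also need the computation, which you omit, that the column heights of $\lambda/(\mu+a)$ are exactly the parts of the conjugate partition of $\nu$.
\item The multiplicity-one claim. ``Row $p$ begins with $\nu_p$ copies of $p$'' is false. For $\nu=(6,5,1)$ the shape is $(7,7,6)/(5,2,1)$: its row $0$ has only two boxes, and its row $2$ begins with a $0$. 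Your description of $T_0$ in Step 2, with its ``shifted so that row $p$ contains exactly $\nu_p$ entries equal to $p$'' clause, has the same defect. Drop that clause; the plain top-down column filling is the right tableau.
\end{itemize}

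The fix for multiplicity one is the same column count. If the content is $\nu$, all those column bounds are equalities for every $m$, which forces every column to read $0,1,\dots,h-1$ from the top. Hence $c^\lambda_{(\mu+a),\nu}=1$. This is a tidier route to multiplicity one than the paper's, but it does not give strictness.
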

\begin{proof}
	For concreteness, let us introduce coordinates for the boxes of the Young diagram of $\lambda/(\mu+a)$ so that the lower left corner is at the origin.  Define the critical line to be the vertical line $x=\lambda_0-(i-a)$.  Our first observation is that for any Littlewood-Richardson tableau $T\in\operatorname{LRT}(\lambda/(\mu+a))$, the reverse lattice word condition as well as the column-strict condition implies that entries of $T$ lying to the right of the critical line are fixed, i.e. the $p^{th}$ row must contain only $p$'s for $0\leq p\leq r$.  Moreover, if $t$, $0\leq t\leq r$ is the largest index for which $\nu_t\geq (i-a)$, then we claim that there is exactly one Littlewood-Richardson tableau $T$ whose entries to the left of the critical line all lie in $\{0,\ldots,t\}$.  To see this, start filling in the boxes to the left of the critical line, column by column moving top to bottom and from right to left.  Note that the righter-most column to the left of the critical line has $t_1=\#\left\{p \ | \ \nu_p\geq (i-a)+1\right\}$ boxes where $t_1\leq t$.  Hence, by the reverse lattice and column-strict properties, the entries in that column must be exactly $\{0,\ldots,t_1-1\}$.  Inductively, the $q^{th}$ righter-most column to the left of the critical line has $t_q=\#\{p \ | \ \nu_p\geq (i-a)+q\}$ boxes where $t_q\leq t_{q-1}\leq \cdots\leq t_1\leq t$.  Moreover the entries in those boxes can only be $\{0,\ldots,t_{q-1}-1\}$ by the weakly increasing row property.  Again by the reverse lattice word property, it follows that the boxes in the $q^{th}$ righter-most column to the left of the critical line must be filled with the numbers $\{0,\ldots,t_q-1\}$.  This shows that the entries of $T\in\operatorname{LRT}_\nu(\lambda/(\mu+a))$ are uniquely determined, and hence $c^\lambda_{(\mu+a),\nu}=1$.

	Finally, if $\nu'$ is any other Littlewood-Richardson partition for $\lambda/(\mu+a)$ distinct from $\nu$, then it must differ in the entries $\{t+1,\ldots,r\}$ and hence we must have $\nu'_j\geq \nu_j$ for each $j\in\{t+1,\ldots,r\}$ with strict inequality for at least one $j$.  Therefore, it follows from Equation \eqref{eq:alpha2} that we must have $\alpha^i(K')=\alpha^i(\nu')<\alpha^i(\nu)=\alpha^i(K)$, and the second statement follows from Lemma \ref{lem:LRr}.
	%Then the Littlewood-Richardson condition implies that in any Littlewood-Richardson tableau $T'$ corresponding to $\nu'$, its entries to the right of the vertical line $x=\nu_0-(i-a)$ must be the same as that of $\nu$, namely the $k^{th}$ row must consist of only $k$'s to the right of that line.  Let $t$, $0\leq t\leq r$ be the largest index for which $\nu_t-(i-a)\geq 0$.  Then we further observe that for any Littlewood-Richardson filling of the Young diagram of $\lambda/(\mu+a)$, if the entries in the boxes that lie to the left of the line $x=d-r-(i-a)$ use only the numbers $\{0,\ldots,t\}$, then that filling is unique and its content must be $\nu$, again by the Littlewood-Richardson condition.  Therefore if $\nu'$ and $\nu$ are different, they must differ by their entries in $\{t+1,\ldots,r\}$.  On the other hand, if $\nu'$ has numbers from $\{t+1,\ldots,r\}$ to the left of the line $x=d-r-(i-a)$ then its $\alpha^i$-statistic must be strictly smaller than that of $\nu$, since the numbers in $\{t+1,\ldots,r\}$ only occur to the right of the line $x=d-r-(i-a)$ in the filling corresponding to $\nu$.  
	See Figure \ref{fig:LambdaMu}.

	\begin{figure}
		\begin{center}
			\begin{tikzpicture}[scale=1]
				% Axes
				\draw[->, thick] (-0.5,0) -- (11,0) node[right] {$x$};
				\draw[->, thick] (0,-0.5) -- (0,5) node[above] {$y$};
				% Origin
				\node[below left] at (0,0) {$0$};
				% X-axis points
				\coordinate (a) at (2,0);
				\coordinate (b) at (8,0);
				\coordinate (lam) at (10.4,0);
								
				\node[below] at (a) {$a$};
				\node[below] at (lam) {$\lambda_0$};
				\node[below] at (b) {$\lambda_0-(i-a)$};
				
				% Y-axis points
				\coordinate (rp1) at (0,1.8);
				\coordinate (rmt) at (0,4.2);
				
				\node[left] at (rp1) {$r-t$};
				\node[left] at (rmt) {$r+1$};
				
				\node[above] at (8,5) {critical line};
				
				% Dashed vertical lines (first quadrant only)
				\draw[dashed] (2,0) -- (2,5);
				\draw[dashed] (8,0) -- (8,5);
				\draw[dashed] (10.4,0) -- (10.4,5);

				% Dashed horizontal lines (first quadrant only)
				\draw[dashed] (0,1.8) -- (8,1.8);
				\draw[dashed] (0,4.2) -- (8,4.2);
				
				% ---- BOX STRIP ABOVE X-AXIS ----
				\def\boxsize{0.6}
				\def\ybox{0} % vertical offset above x-axis
				
				% Starting x at point a
				\def\xstart{2}
				
				% First 4 boxes labeled 0
				\foreach \i in {0,...,3} {
					\draw (\xstart+\i*\boxsize, \ybox)
					rectangle ++(\boxsize,\boxsize);
					\node at (\xstart+\i*\boxsize+0.5*\boxsize, \ybox+0.5*\boxsize) {$0$};
				}
				\foreach \i in {4,...,7} {
					\draw (\xstart+\i*\boxsize, \ybox+\boxsize)
					rectangle ++(\boxsize,\boxsize);
					\node at (\xstart+\i*\boxsize+0.5*\boxsize, \ybox+1.5*\boxsize) {$0$};
				}
				
				\foreach \i in {10,...,13} {
					\draw (\xstart+\i*\boxsize, \ybox+6*\boxsize)
					rectangle ++(\boxsize,\boxsize);
					\node at (\xstart+\i*\boxsize+0.5*\boxsize, \ybox+6.5*\boxsize) {$0$};
				}
				
				% Next 4 boxes labeled 1
				
				\foreach \i in {10,...,13} {
					\draw (\xstart+\i*\boxsize, \ybox+5*\boxsize)
					rectangle ++(\boxsize,\boxsize);
					\node at (\xstart+\i*\boxsize+0.5*\boxsize, \ybox+5.5*\boxsize) {$1$};
				}
				
				% Next 4 boxes labeled \vdots
				\foreach \i in {4,...,7} {
					\draw (\xstart+\i*\boxsize, \ybox)
					rectangle ++(\boxsize,\boxsize);
					\node at (\xstart+\i*\boxsize+0.5*\boxsize, \ybox+0.5*\boxsize) {$\vdots$};
				}
				
				\foreach \i in {8,...,12} {
					\draw (\xstart+\i*\boxsize, \ybox+\boxsize)
					rectangle ++(\boxsize,\boxsize);
					\node at (\xstart+\i*\boxsize+0.5*\boxsize, \ybox+1.5*\boxsize) {$\vdots$};
				}
				
				\foreach \i in {10,...,13} {
					\draw (\xstart+\i*\boxsize, \ybox+4*\boxsize)
					rectangle ++(\boxsize,\boxsize);
					\node at (\xstart+\i*\boxsize+0.5*\boxsize, \ybox+4.5*\boxsize) {$\vdots$};
				}
				
				% Next 4 labeled t_2, t_1, r, r
				\def\labels{{t_2},{t_1},{r},{r}}
				\def\xdotstart{\xstart+7*\boxsize}
				\foreach \lab [count=\i] in \labels {
					\draw (\xdotstart+\i*\boxsize, \ybox)
					rectangle ++(\boxsize,\boxsize);
					\node at (\xdotstart+\i*\boxsize+0.5*\boxsize, \ybox+0.5*\boxsize) {$\lab$};
				}
				\def\labels{0,1,{$\small{t+1}$},{$\small{t+1}$},{$\small{t+1}$}}
				\def\xdotstart{\xstart+7*\boxsize}
				\foreach \lab [count=\i] in \labels {
					\draw (\xdotstart+\i*\boxsize, \ybox+2*\boxsize)
					rectangle ++(\boxsize,\boxsize);
					\node at (\xdotstart+\i*\boxsize+0.5*\boxsize, \ybox+2.5*\boxsize) {$\lab$};
				}
				\def\labels{0,{$t$},{$t$},{$t$},{$t$}}
				\def\xdotstart{\xstart+8*\boxsize}
				\foreach \lab [count=\i] in \labels {
					\draw (\xdotstart+\i*\boxsize, \ybox+3*\boxsize)
					rectangle ++(\boxsize,\boxsize);
					\node at (\xdotstart+\i*\boxsize+0.5*\boxsize, \ybox+3.5*\boxsize) {$\lab$};
				}

				% Two final boxes without margins containing dots, ending at x = lambda_0
				\def\xdotstart{\xstart+12*\boxsize}
				\foreach \i in {0,1} {
					% \draw (\xdotstart+\i*\boxsize, \ybox)
					%       rectangle ++(\boxsize,\boxsize);
					\fill (\xdotstart+\i*\boxsize+0.5*\boxsize, \ybox+0.5*\boxsize) circle (2pt);
				}
				\def\xdotstart{\xstart+13*\boxsize}
				\fill (\xdotstart+0.5*\boxsize, \ybox+1.5*\boxsize) circle (2pt);
				\fill (\xdotstart+0.5*\boxsize, \ybox+2.5*\boxsize) circle (2pt);
			\end{tikzpicture}
		\end{center}
		\caption{The unique $\nu$-filling of the skew shape $\lambda/(\mu+a)$.  The dots in the lower right corner count the $\alpha^i$ statistic.}
		\label{fig:LambdaMu}
	\end{figure}
\end{proof}
In the next section we will see the significance of the $\alpha^i$-statistic of a partition; it represents the order of vanishing of a certain specialization of the mixed Hessian determinant. 

\section{Mixed Hessian determinants and weighted NE lattice paths}
\label{sec:MixedHessians}
According to \cite[Lemma 4.19]{MMS}, for any $F\in Q_d$ and for any $0\leq r\leq s(F)-1$, the permuted $r^{th}$ mixed Hessian satisfies the following factorization formula
\begin{equation}
	\label{eq:factor}
	P_r\cdot \operatorname{MHess}_r(F,\mathcal{E})|_{(\underline{X},\underline{Y})}=d!\cdot \left(\phi_d(F)\cdot W_{d-2r}(\underline{X},\underline{Y})\right)_{IJ}	
\end{equation}
where $\phi_d(F)=(c_{q-p})_{0\leq p,q}$ is the bi-infinite Toeplitz matrix of $F$, $$W_{d-2r}(\underline{X},\underline{Y})=W_{d-2r}(X_1,\ldots,X_{d-2r},Y_1,\ldots,Y_{d-2r})$$ is the bi-infinite weighted path matrix corresponding to the NE lattice paths from the infinite vertex set $\mathcal{A}=\left\{(-p,p) \ | \ 0\leq p\right\}$ to the infinite vertex set $\mathcal{B}=\left\{(-q,d-2r+q) \ | \ 0\leq q\right\}$ where the North, respectively East edges emanating from the diagonal line $x+y=p$ are weighted by the variables $Y_{p+1}$, respectively $X_{p+1}$, for $0\leq p\leq d-2r-1$, and where $I=\left\{0,\ldots,r\right\}$ and $J=\left\{r,\ldots,2r\right\}$, and $P_r$ is the $(r+1)\times (r+1)$ permutation matrix for the order-reversing permutation $p\mapsto r-p$, $0\leq p\leq r$.  

The following so-called Pl\"ucker expansion formula for the permuted mixed Hessian determinant follows from Equation \eqref{eq:factor} and is implicit in \cite[Theorem 4.21]{MMS}.  We carefully state and prove it here as a lemma for future reference.
\begin{lemma}
	\label{lem:Plucker}
	For any $F\in Q_d$ with Sperner number $s=s(F)$, and for any $0\leq r\leq s-1$, we have 
\begin{equation}
	\label{eq:detMixed}
	\det\left(P_r\cdot \operatorname{MHess}_r(F,\mathcal{E})|_{(\underline{X},\underline{Y})}\right)=(d!)^{r+1}\cdot \sum_{K\in\binom{[d-r]_0}{r+1}}\Delta_K\left(\phi^r_d(F)\right)\cdot \Delta_{(K+r)J}\left(W_{d-2r}\left(\underline{X},\underline{Y}\right)\right).
\end{equation}
Moreover the determinant 
\begin{equation}
\label{eq:Pathsystem}	\Delta_{(K+r)J}\left(W_{d-2r}(\underline{X},\underline{Y})\right)=\sum_{\mathcal{P}\colon \mathcal{A}_{K+r}\rightarrow \mathcal{B}_J}\prod_{z=1}^{d-2r}X_z^{a_z(\mathcal{P})}Y_z^{b_z(\mathcal{P})}
\end{equation} 
is the sum of monomials indexed by vertex disjoint NE lattice path systems from $\mathcal{A}_{K+r}=\left\{A_{k_q}=(-k_q-r,k_q+r) \ | \ 0\leq q\leq r\right\}$ to $\mathcal{B}_J=\left\{B_{r+q}=(-q-r, d-r+q) \ | \ 0\leq q\leq r\right\}$, where $a_z(\mathcal{P})$, respectively $b_z(\mathcal{P})$, is the number of East, respectively North, edges of $\mathcal{P}$ emanating from the diagonal $x+y=z-1$, for $1\leq z\leq d-2r$.
\end{lemma}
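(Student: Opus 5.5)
The plan is to take determinants of both sides of the factorization \eqref{eq:factor} and expand the right-hand side with the Cauchy--Binet formula. The left side gives $\det(P_r\cdot\operatorname{MHess}_r)$. The right side is $(d!)^{r+1}\det\left((\phi_d(F)\cdot W_{d-2r})_{IJ}\right)$. Since $(\phi_d(F)W_{d-2r})_{IJ}=\phi_d(F)_{I,\ast}\cdot (W_{d-2r})_{\ast,J}$, with $I=\{0,\ldots,r\}$ and $J=\{r,\ldots,2r\}$, Cauchy--Binet gives
\[
\det\left((\phi_d(F) W_{d-2r})_{IJ}\right)=\sum_{L}\Delta_{IL}(\phi_d(F))\cdot\Delta_{LJ}(W_{d-2r}),
\]
where $L$ ranges over $(r+1)$-subsets of the (infinite) column index set. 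The sum is really finite for two reasons: $\phi_d(F)$ has only finitely many nonzero entries in rows $0,\ldots,r$, and $W_{d-2r}$ is band-limited, since a NE path from $A_p$ to $B_q$ exists only when $q-p$ lies in a bounded range. So I would first justify that only finitely many $L$ contribute, which is what makes Cauchy--Binet legitimate for these bi-infinite matrices.

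Next I identify which $L$ survive and reindex. The entries of $\phi_d(F)_{I,\ast}$ are $c_{q-p}$, which vanish unless $0\le q-p\le d$. For a nonzero contribution, every column $\ell\in L$ must therefore satisfy $r\le \ell\le d$; otherwise some column of the $(r+1)\times(r+1)$ submatrix $\phi_d(F)_{IL}$ is forced to be zero. The argument runs as follows:
\begin{itemize}
\item If $\ell<r$, the entry $c_{\ell-p}$ vanishes for $p>\ell$, but the column need not vanish entirely. Even so, the minor should vanish by a triangularity/rank argument: columns with $\ell<r$ lie in the span of fewer coordinate directions. Alternatively, this direction can be handled on the $W$ side, since there are no NE paths from $A_\ell$ with $\ell<r$ to $\mathcal{B}_J$ compatible with vertex-disjointness.
\item Writing $L=K+r$ with $K\subseteq[d-r]_0$, the rows $I$ with the order reversed by $P_r$ give the submatrix $(c_{r+k_q-p})_{p,q}$, which is exactly the column-$K$ submatrix of $\phi^r_d(F)$.
\end{itemize}
This yields \eqref{eq:detMixed}, up to checking that the reversal by $P_r$ has been applied consistently on both sides; in \eqref{eq:factor} it is already absorbed on the left.

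For \eqref{eq:Pathsystem}, I apply the Lindstr\"om--Gessel--Viennot lemma to the planar weighted NE lattice with sources $\mathcal{A}_{K+r}$ and sinks $\mathcal{B}_J$. Each entry of $W_{d-2r}$ is the weighted sum of paths from $A_\ell$ to $B_j$, where the weight of a path is the product of $X_z$ over East edges and $Y_z$ over North edges leaving the diagonal $x+y=z-1$. The sources lie on one antidiagonal and the sinks on another, both ordered monotonically. Planarity then forces every vertex-disjoint system to use the identity permutation, so LGV gives the determinant as a sum over nonintersecting systems with all signs $+$. Collecting weights gives the monomial $\prod X_z^{a_z}Y_z^{b_z}$.

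The main obstacle I expect is the bookkeeping: confirming that the nonvanishing range of $L$ is exactly $\{r,\ldots,d\}$ (equivalently $K\subseteq[d-r]_0$), and matching coordinates and orientation conventions so that no stray sign or reversed index appears. I would settle this by the path-existence argument. All paths have length exactly $d-2r$ between the diagonals, and the endpoint coordinates force $0\le k_q\le d-r$ for a disjoint system to exist. Any $L$ outside this range then contributes zero through $\Delta_{LJ}(W)$.
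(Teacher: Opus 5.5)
Your plan is the paper's own proof: take determinants in \eqref{eq:factor}, apply Cauchy--Binet over the (effectively finite) middle index, use the identification $\Delta_{I,K+r}(\phi_d(F))=\Delta_K(\phi^r_d(F))$, and apply Lindstr\"om--Gessel--Viennot with only the identity permutation contributing. It is correct once you drop the claim that $\Delta_{IL}(\phi_d(F))$ itself vanishes for $L\not\subset\{r,\ldots,d\}$, which is false: for $r=1$ and $L=\{0,2\}$ this minor equals $c_0c_1$. The vanishing must come solely from the $W$ side. There, row $\ell$ of $(W_{d-2r})_{\ast J}$ is zero unless $r\le \ell\le d$, because a NE path from $(-\ell,\ell)$ to $(-j,d-2r+j)$ exists only if $j\le\ell\le j+d-2r$.

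Also, no row reversal occurs on the right-hand side. The rows $0,\ldots,r$ of $\phi_d(F)$ in their natural order already give $(c_{r+k_q-p})_{p,q}$, and $P_r$ appears only on the left of \eqref{eq:factor}. Reversing the rows there would introduce a spurious sign $(-1)^{\binom{r+1}{2}}$.
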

\begin{proof}
	It follows from the factorization formula in Equation \eqref{eq:factor} and from the generalized Cauchy-Binet theorem that the determinant of the permuted $r^{th}$ mixed Hessian matrix satisfies
	\begin{equation}
		\label{eq:pluck1}
		\det\left(P_r\cdot \operatorname{MHess}_r(F,\mathcal{E})|_{(\underline{X},\underline{Y})}\right)=(d!)^{r+1}\cdot \sum_{K\in\binom{\N_0}{r+1}}\Delta_{IK}\left(\phi_d(F)\right)\cdot \Delta_{KJ}\left(W_{d-2r}\left(\underline{X},\underline{Y}\right)\right).
	\end{equation}
	where the sum is over all $r+1$-subsets $K=\{0\leq k_0<\cdots<k_r\}\subset\N_0$ of natural numbers, including $0$.  Since $W_{d-2r}(\underline{X},\underline{Y})$ is a weighted path matrix, it follows from the Lindst\"om-Gessel-Viennot theorem \cite[Fact 4.17]{MMS} that its $KJ$-minor is the sum, overall all vertex disjoint NE lattice path systems from $\mathcal{A}_K=\{(-k_q,k_q) \ | \ 0\leq q\leq r\}$ to $\mathcal{B}_J=\{(-q-r,d-r+q) \ | \ 0\leq q\leq r\}$, of the product of edge weights in the path system, which is Equation \eqref{eq:Pathsystem}.  For $(r+1)$-subsets $K\not\subset\{r,\ldots,d\}$ (i.e. $k_0<r$), $\Delta_{KJ}(W_{d-2r}(\underline{X},\underline{Y}))\equiv 0$, since there are no vertex disjoint NE lattice path systems from $\mathcal{A}_K$ to $\mathcal{B}_J$ in that case, cf. \cite[Lemma 4.18]{MMS}.  On the other hand, for $K\subset\{r,\ldots,d\}$ (i.e. $r\leq k_0$), we have an equality of minors 
	$$\Delta_{IK}(\phi_d(F))=\Delta_{K-r}(\phi^r_d(F))$$
	where $K-r\subset \{0,\ldots,d-r\}$ is the subset $K$ shifted down by $r$, indexing columns of the finite Toeplitz matrix $\phi^r_d(F)$.  Putting these two observations together, Equation \eqref{eq:detMixed} follows.
\end{proof}

%Recall our Pl\"ucker expansion formula for the determinant of the permuted $r^{th}$ mixed Hessian determinant:

%where $W_{d-2r}\left(\underline{X},\underline{Y}\right)$ is the bi-infinite weighted path matrix for the NE lattice paths from the infinite vertex set $\mathcal{A}=\left\{(-p,p) \ | \ 0\leq p\right\}$ to the infinite vertex set $\mathcal{B}=\left\{(-q,d-2r+q) \ | \ 0\leq q\right\}$ with North, respectively East edges issuing from diagonal line $x+y=p$ weighted with the variables $Y_{p+1}$, respectively $X_{p+1}$ for $0\leq p\leq d-2r-1$, and where $J=\left\{r,\ldots,2r\right\}$ and $K+r=\left\{r\leq k_0+r<\cdots<k_r+r\leq d\right\}$ is the index $K$ shifted up by $r$.  It follows from \cite[Fact 4.17]{MMS} that the minor determinant   

%a homogeneous polynomial in the variables $(\underline{X},\underline{Y})=(X_1,\ldots,X_{d-2r},Y_1,\ldots,Y_{d-2r})$ whose monomial terms are the product of edge weights of vertex disjoint NE lattice path systems from $\mathcal{A}_{K+r}=\left\{(-k_q-r,k_q+r) \ | \ 0\leq q\leq r\right\}$ to $\mathcal{B}_J=\left\{(-p-r,d-r+p) \ | \ 0\leq p\leq r\right\}$
%where the North, respectively East, edges issuing from the diagonal line $x+y=p$ are weighted by the variables $Y_{p+1}$, respectively $X_{p+1}$, for $0\leq p\leq d-2r-1$.

\begin{example}
	\label{ex:WPM}
	Let $d=6$ and $r=2$ so that $d-r=4$ and  $d-2r=2$.  Then $J=\left\{2,3,4\right\}$.  The weighted path determinants are given in the table below (together with the $\alpha^i$-statistics for $i=3$):
	$$\begin{array}{l|l|l|r}
		K & K+r & \Delta_{(K+r)J}\left(W_2(X_1,X_2,Y_1,Y_2)\right) & \alpha^i(K)\\
		\hline
		&&& \\
		\left\{0,1,2\right\} & \left\{2,3,4\right\} & Y_1^3Y_2^3 & 3\\
		&&\\
		\left\{0,1,3\right\} & \left\{2,3,5\right\} & X_1Y_1^2Y_2^3+X_2Y_1^3Y_2^2 & 2\\
		&&&\\
		\left\{0,1,4\right\} & \left\{2,3,6\right\} & X_1X_2Y_1^2Y_2^2 & 2\\
		&&&\\
		\left\{0, 2, 3\right\} & \left\{2,4,5\right\} & X_1^2Y_1Y_2^3+X_1X_2Y_1^2Y_2^2+X_2^2Y_1^3Y_2 & 1\\
		&&&\\
		\left\{0,2,4\right\} & \left\{2,4,6\right\} & X_1^2X_2Y_1Y_2^2+X_1X_2^2Y_1^2Y_2 & 1\\
		&&&\\
		\left\{0,3,4\right\} & \left\{2,5,6\right\} & X_1^2X_2^2Y_1Y_2 & 1\\
		&&&\\
		\left\{1, 2, 3\right\} & \left\{3,4,5\right\} & X_1^3Y_2^3+X_1^2X_2Y_1Y_2^2+X_1X_2^2Y_1^2Y_2+X_2^3Y_1^3 & 0\\
		&&&\\
		\left\{1, 2, 4\right\} & \left\{3,4,6\right\} & X_1^3X_2Y_2^2+{X_1^2X_2^2Y_1Y_2}+X_1X_2^3Y_1^2 & 0\\
		&&&\\
		\left\{1, 3, 4\right\} &  \left\{3,5,6\right\} & X_1^3X_2^2Y_2+X_1^2X_2^2Y_1 & 0\\
		&&&\\
		\left\{2, 3, 4\right\} & \left\{4,5,6\right\} & X_1^3X_2^3 & 0\\
	\end{array}$$
	Figure \ref{fig:WPM} shows the vertex disjoint path system corresponding to the monomial term $X_1^2X_2^2Y_1Y_2$ of $\Delta_{(K+r)J}(W_2(X_1,X_2,Y_1,Y_2))$ for $K+r=\{3,4,6\}$, with $d$, $r$, and $J$ as above.
	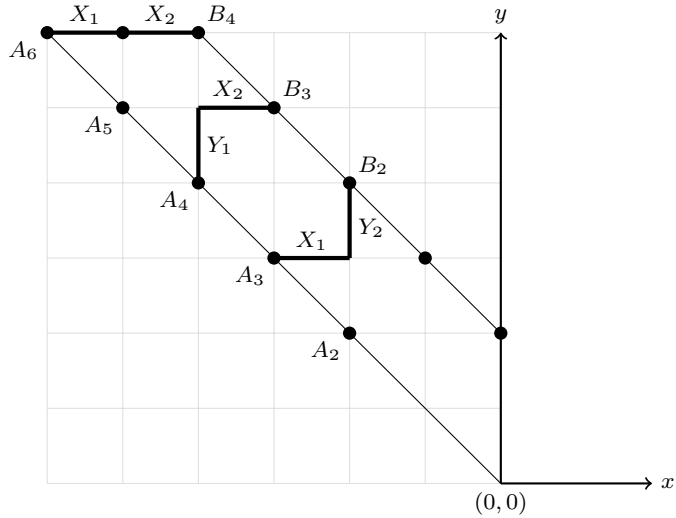
\begin{figure}
		\begin{center}
			\begin{tikzpicture}[scale=1]
			% Draw 6x6 grid
				\draw[step=1cm, gray, very thin] (0,0) grid (6,6);
				
				% Axes at lower-right corner (6,0)
				\draw[->, thick] (6,0) -- (8,0) node[right] {$x$};
				\draw[->, thick] (6,0) -- (6,6) node[above] {$y$};
				
				% Main diagonal
				\draw[] (0,6) -- (6,0);
				
				% Parallel diagonal
				\draw[] (2,6) -- (6,2);
				
				% Points on main diagonal
				\foreach \x/\y/\lab in {
					0/6/A_6,
					1/5/A_5,
					2/4/A_4,
					3/3/A_3,
					4/2/A_2
				}{
					\fill (\x,\y) circle (2.5pt);
					\node[below left] at (\x,\y) {$\lab$};
				}
				
				% Points on parallel diagonal
				\foreach \x/\y/\lab in {
					2/6/B_4,
					3/5/B_3,
					4/4/B_2,
					5/3/,
					6/2/ 
				}{
					\fill (\x,\y) circle (2.5pt);
					\node[above right] at (\x,\y) {$\lab$};
				}
				
				% Thick segment from A_6 (0,6) to B_4 (3,5)
				\draw[ultra thick] (0,6) -- (2,6);
				\draw[ultra thick] (2,4) -- (2,5);
				\draw[ultra thick] (2,5) -- (3,5);
				\draw[ultra thick] (3,3) -- (4,3);
				\draw[ultra thick] (4,3) -- (4,4);

				% Midpoint of segment
				\fill (1,6) circle (2.5pt);
				
				% Labels X_1 and X_2 on segment halves
				\node[above] at (0.5,6) {$X_1$};
				\node[above] at (1.5,6) {$X_2$};
				\node[right] at (2,4.5) {$Y_1$};
				\node[above] at (2.4,5) {$X_2$};
				\node[above] at (3.5,3) {$X_1$};
				\node[right] at (4,3.4) {$Y_2$};
				\node[below] at (6,0) {$(0,0)$};
			\end{tikzpicture}
		\end{center}
		\caption{vertex disjoint path system for the monomial term $X_1^2X_2^2Y_1Y_2$ of $\Delta_{(K+r)J}(W_2(X_1,X_2,Y_1,Y_2))$ for $K+r=\{3,4,6\}$ and $J=\{2,3,4\}$}
		\label{fig:WPM} 
	\end{figure}
	Note that if we take $i=3>r=2$ so that $i-r=1$, then specializing the variables to $Y_1=t$ and $X_1,X_2,Y_2=1$ yields polynomials $\Delta_{(K+r)J}\left(W_2(1,1,t,1)\right)$ whose order of vanishing at $t=0$ is the $\alpha^i$ statistic of $K$. 
\end{example} 
We can generalize the last statement of Example \ref{ex:WPM}.
\begin{lemma}
	\label{lem:Specialization}
	Fix $a,d,i,r$ as in Lemma \ref{lem:alpha}, with $J=\{r,\ldots,2r\}$ and $K\in\binom{[d-r]_0}{r+1}$ as in Lemma \ref{lem:Plucker}.  Let $\Delta^i_{(K+r)J}(t)$ be the specialization of $\Delta_{(K+r)J}\left(W_{d-2r}(\underline{X},\underline{Y})\right)$ at the point where $Y_1=\cdots=Y_{i-r}=t$, and the remaining variables are equal to $X_i=1=Y_j$.  Then the order of vanishing of univariate polynomial $\Delta^i_{(K+r)J}(t)$ at $t=0$ is equal to the $\alpha^i$-statistic of $K$, $\alpha^i(K)$.	 
\end{lemma}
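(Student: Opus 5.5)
The plan is to read the order of vanishing directly off the path-system expansion \eqref{eq:Pathsystem} of Lemma \ref{lem:Plucker}. By the Lindstr\"om--Gessel--Viennot description, $\Delta_{(K+r)J}(W_{d-2r}(\underline{X},\underline{Y}))$ is a sum, with all coefficients $+1$, of the weight monomials of the vertex disjoint NE path systems $\mathcal{P}\colon\mathcal{A}_{K+r}\to\mathcal{B}_J$. Under the specialization $Y_1=\cdots=Y_{i-r}=t$ and all other variables equal to $1$, each system contributes $t^{N(\mathcal{P})}$. Here $N(\mathcal{P})=\sum_{z=1}^{i-r}b_z(\mathcal{P})$ is the total number of North steps taken from the first $i-r$ diagonals $x+y=0,\ldots,i-r-1$. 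Since every term is $t^{N(\mathcal{P})}$ with coefficient $+1$, no cancellation can occur. Hence the order of vanishing of $\Delta^i_{(K+r)J}(t)$ at $t=0$ equals $\min_{\mathcal{P}}N(\mathcal{P})$. It therefore suffices to show that this minimum is $\alpha^i(K)=\sum_{q=0}^r\max\{(i-r)-(k_q-q),0\}$.

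\textbf{Lower bound.} By vertex disjointness and planarity, the path starting at $A_{k_q}=(-k_q-r,k_q+r)$ must end at $B_{r+q}=(-q-r,d-r+q)$. Such a path has exactly $d-2r$ steps, one from each diagonal, of which exactly $e_q:=k_q-q$ are East steps. Among its first $i-r$ steps there are therefore at most $e_q$ East steps, and so at least $\max\{(i-r)-e_q,0\}$ North steps. Summing over $q$ gives $N(\mathcal{P})\geq\alpha^i(K)$ for every admissible system $\mathcal{P}$. The hypothesis $i\leq\flo{d}$ gives $i-r\leq d-2r$, so the first $i-r$ diagonals really are among the $d-2r$ weighted ones.

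\textbf{Achievability.} Take the ``East-first'' system, in which path $q$ is $E^{e_q}N^{d-2r-e_q}$. Geometrically, path $q$ runs horizontally at height $k_q+r$ from $x=-k_q-r$ to $x=-q-r$, and then vertically at $x=-q-r$ up to height $d-r+q$. Now take $q<q'$. The horizontal segment of path $q'$ lies at height $k_{q'}+r>k_q+r$ and at abscissae $x\leq -q'-r<-q-r$, so it misses path $q$ entirely. The vertical segment of path $q'$ lies on the line $x=-q'-r\neq -q-r$ and begins at height $k_{q'}+r$, which is above the horizontal segment of path $q$. Hence the paths are vertex disjoint. In this system path $q$ spends its first $\min\{e_q,i-r\}$ steps going East, so it contributes exactly $\max\{(i-r)-e_q,0\}$ North steps among its first $i-r$ steps. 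Thus this system attains $N(\mathcal{P})=\alpha^i(K)$.

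The only step needing care is the disjointness check for the East-first system, together with the remark that LGV for NE lattice paths yields a sign-free sum, so the minimal exponent cannot cancel. Both are short, so I do not expect a serious obstacle. Combining the two bounds, the order of vanishing is exactly $\alpha^i(K)$; one can check this against the table in Example \ref{ex:WPM}.
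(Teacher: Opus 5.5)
Your proposal is correct and follows the same route as the paper: specialize the path-system expansion of Lemma \ref{lem:Plucker}, note that any path from $A_{k_q+r}$ to $B_{r+q}$ takes at least $\max\{(i-r)-(k_q-q),0\}$ North steps among its first $i-r$ steps, and observe that the East-first system attains this bound. Your version is more complete than the paper's, which simply asserts that the East-first system is minimizing; you explicitly prove the per-path lower bound, check that the East-first system is vertex disjoint, and note that the sign-free sum rules out cancellation of the lowest-order term.
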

\begin{proof}
	First note that we can realize our specialization $\Delta^i_{(K+r)J}(t)$ as the determinant of the weighted path matrix from $\mathcal{A}_{K+r}$ to $\mathcal{B}_J$ by specializing the weights of our edges in our NE lattice path graph so that the first $(i-r)$ North steps, i.e. the North steps emanating from the diagonal lines $x+y=p$ for $0\leq p\leq i-r-1$, are weighted with $t$, and the remaining edges are weighted with $1$.  Then we must identify a vertex disjoint NE lattice path system from $\mathcal{A}_{K+r}$ to $\mathcal{B}_J$ that minimizes the number of $t$-weighted North edges.  One such minimizing vertex disjoint path system is the one in which all East steps precede all North steps in every path.  Since the number of East steps from $A_{k_q+r}=(-k_q-r,k_q+r)$ to $B_{r+q}=(-r-q,d-r+q)$ is $k_q-q$, it follows that the power of $t$ coming from this path is equal to 
	$\max\{(i-r)-(k_q-q),0\}$, and hence summing over $q$ yields the minimal power of $t$ in the polynomial $\Delta^i_{(K+r)J}(t)$, and the result follows from Equation \eqref{eq:alpha}.	
\end{proof}

\section{Proof of Theorem \ref{thm:Lorentzian2}}
\label{sec:Theorem1}
We are now in a position to prove Theorem \ref{thm:Lorentzian2}.  We restate it here for the convenience of the reader.
\begin{theorem}
	\label{thm:Lorentzian21}
	Let $F\in Q_d$ be a homogeneous $d$-form, and fix $0\leq i\leq \flo{d}$.	The following are equivalent.
	\begin{enumerate}
		\item $F$ is $i$-Lorentzian
		\item $\phi^i_d(F)$ is strongly totally nonnegative.
		\item $A_F$ satisfies mixed HRR$_i$ on the standard open convex cone of linear forms
		$$U=\left\{ax+by \ | \ (a,b)\in\R^2_{<0}\right\}.$$
	\end{enumerate}
\end{theorem}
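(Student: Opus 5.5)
The plan is to prove the cycle (1)$\Rightarrow$(2)$\Rightarrow$(3)$\Rightarrow$(1), working directly from the definitions of \cite{MMS}. There, $F$ is $i$-Lorentzian if it is a limit of strictly $i$-Lorentzian forms, and by \cite[Proposition 4.8]{MMS} the strictly $i$-Lorentzian forms are exactly those with $\phi^i_d(F)$ totally positive. Mixed HRR$_i$ on a cone is the family of sign conditions on $\det \operatorname{MHess}_j(F,\mathcal{E})$, for $0\leq j\leq \min\{i,s-1\}$, evaluated at points of the cone. Since these determinants are homogeneous, replacing $(a,b)$ by $(-a,-b)$ changes each of them only by a fixed global sign. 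So the cone written with $\R^2_{<0}$ in the restatement and the cone $\R^2_{>0}$ in Theorem \ref{thm:Lorentzian2} give the same conditions, and I will work with positive coordinates throughout.

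For (1)$\Rightarrow$(2), write $F=\lim F_n$ with each $\phi^i_d(F_n)$ totally positive. Lemma \ref{lem:super} then makes every $\phi^j_d(F_n)$, $0\leq j\leq i$, totally positive. Since minors are continuous in the coefficients, the limits $\phi^j_d(F)$ are all totally nonnegative, so $\phi^i_d(F)$ is strongly totally nonnegative.

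For (2)$\Rightarrow$(3), fix $j\leq \min\{i,s-1\}$ and apply the Pl\"ucker expansion of Lemma \ref{lem:Plucker} with $r=j$. By hypothesis every maximal minor $\Delta_K(\phi^j_d(F))$ is nonnegative. By Equation \eqref{eq:Pathsystem}, each $\Delta_{(K+j)J}(W_{d-2j})$ is a polynomial with nonnegative coefficients, so it is $\ge 0$ at any point with all $X_z,Y_z>0$. Hence the determinant is $\geq 0$ on the open cone. For strictness, note that $\operatorname{rank}\phi^j_d(F)=\min\{j+1,s\}=j+1$ by \cite[Lemma 4.5]{MMS}, so some maximal minor $\Delta_K$ is strictly positive. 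For that $K$ the path determinant is a nonzero polynomial: the ``all East steps first'' system used in the proof of Lemma \ref{lem:Specialization} exists. Therefore it is strictly positive on the open orthant, and the whole sum is strictly positive. Combined with the known sign of $\det P_j=(-1)^{\binom{j+1}{2}}$, this gives exactly the mixed HRR$_j$ sign condition.

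The main obstacle is (3)$\Rightarrow$(1), and I would route it through (2). First I would show that mixed HRR$_r$ on $U$ forces every maximal minor of $\phi^r_d(F)$ to be nonnegative. Once that holds for each $r\leq i$, Lemma \ref{lem:LRr} writes every $(r+1)\times(r+1)$ minor of $\phi^j_d(F)$, for $j\geq r$, as a nonnegative combination of maximal minors of $\phi^r_d(F)$, which gives strong total nonnegativity. To extract individual maximal minors from a single positive polynomial, I would specialize the Pl\"ucker expansion along curves in the positive orthant, setting $Y_z=t^{w_z}$ and $X_z=1$. The aim is to order the index sets $K$ so that a chosen $K$ contributes the lowest power of $t$ among the $K'$ whose minors are not yet known to be nonnegative. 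Lemma \ref{lem:Specialization}, together with the $\alpha^i$-statistic of Lemma \ref{lem:alpha}, is the prototype of such a leading-term analysis. Letting $t\to 0^+$ would then force $\Delta_K\geq 0$, and a triangular induction over the order would finish this step. Where a clean single-weight separation fails, I would instead use Lemma \ref{lem:alpha} to trade a problematic maximal minor for a minor $\Delta_{IJ}(\phi^i_d(F))$ with leading coefficient $1$.

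Finally, for (2)$\Rightarrow$(1), I would perturb $F$ by $\varepsilon G$ with $\phi^i_d(G)$ totally positive, using Lemma \ref{lem:TPs} to control the orders $j\geq s$. I would then check that total positivity of $\phi^i_d(F+\varepsilon G)$ holds for small $\varepsilon>0$, using the nonnegative Littlewood--Richardson expansions to keep the minors of $F$ from cancelling the positive contributions of $G$.
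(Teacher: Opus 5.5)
Your (1)$\Rightarrow$(2) and (2)$\Rightarrow$(3) follow the paper's arguments, and your rank argument supplies the strictness that the paper leaves implicit.

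One correction to your first paragraph. Negating every pair $(X_z,Y_z)$ multiplies the $j$-th mixed Hessian determinant by $(-1)^{d(j+1)}$. So for odd $d$ the cone $\R^2_{<0}$ imposes the \emph{opposite} sign conditions; for example, $F=(X+Y)^3$ fails HRR$_0$ there. The ``$<0$'' in the restatement is a typo for ``$>0$'', not an equivalent formulation.

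The genuine gap is in (3)$\Rightarrow$(1), in both of your steps.

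\textbf{Step A.} This step cannot be done by leading terms along curves $Y_z=t^{w_z}$. Such limits only certify coefficients of monomials that are vertices of the Newton polytope of the whole Pl\"ucker sum, and the path polynomials $\Delta_{(K+r)J}(W_{d-2r})$ overlap. In Example \ref{ex:WPM}, the only monomial for $K=\{0,3,4\}$ is the midpoint of two monomials for $K=\{1,2,4\}$. Hence $\Delta_{\{0,3,4\}}(\phi^2_6(F))$ is invisible along every curve once $\Delta_{\{1,2,4\}}(\phi^2_6(F))\neq 0$, and positivity on the open orthant does not fix the sign of non-vertex coefficients.

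Already for $d=3$, $r=1$ the determinant is, up to $(3!)^2$,
$$(c_1^2-c_0c_2)Y_1^2+(c_1c_2-c_0c_3)X_1Y_1+(c_2^2-c_1c_3)X_1^2.$$
Since $X^2-XY+Y^2>0$ on the open quadrant, HRR$_1$ alone says nothing about the middle minor. The inequality $c_1c_2\geq c_0c_3$ comes from combining HRR$_0$, the two outer minors and the Toeplitz relations (for $c_1c_2>0$, multiply $c_1^2\geq c_0c_2$ by $c_2^2\geq c_1c_3$). That is not a triangular leading-term argument.

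Your fallback through Lemma \ref{lem:alpha} is circular. It needs the sign of $\Delta_{IJ}(\phi^i_d(F))$, which is exactly the total nonnegativity you are trying to prove. The paper uses that lemma only for Theorem \ref{thm:TNNSTNN}, where this is the hypothesis.

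\textbf{Step C.} Additive perturbation fails. Take $d=2$, $i=1$, $F=(X+Y)^2$, with normalized coefficients $(1,1,1)$, so $\phi^1_2(F)$ is strongly totally nonnegative. Take $G$ with normalized coefficients $(\frac{1}{10},1,5)$, so $\phi^1_2(G)$ is totally positive. Then $\det\phi^1_2(F+\varepsilon G)=-\frac{31}{10}\varepsilon+\frac12\varepsilon^2<0$ for small $\varepsilon>0$. Cross terms in the determinant of a sum carry signs, and the Littlewood--Richardson expansion concerns minors of a single Toeplitz matrix, so it does not control them.

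\textbf{The missing idea.} The paper never proves (3)$\Rightarrow$(2) directly; it goes from (3) to (1) as follows.
\begin{itemize}
\item It passes to $G_t(X,Y)=F(X+tY,tX+Y)$, $0<t<1$, which carries the closed cone $\overline U$ into $U$. Hence $A_{G_t}$ satisfies mixed HRR strictly on $\overline U$.
\item At boundary points where each pair $(X_z,Y_z)$ is $(1,0)$ or $(0,1)$, all paths must take the same step on each diagonal. The Pl\"ucker sum therefore collapses to a single consecutive minor.
\item With Lemma \ref{lem:Properties} and Fekete's theorem (this is \cite[Theorem 1]{MMS}), $\phi^{s-1}_d(G_t)$ is totally positive. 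When $s\geq i+1$ this finishes the proof, since $G_t\to F$.
\item For $s\leq i$ the paper runs a downward induction on the Sperner number with $H_{t,u}=G_t+(-1)^s uY^d$. By Lemma \ref{lem:TPs}, $\phi^j_d(G_t)$ is TP$_s$ of rank $s$ for $s\leq j\leq i$. So the new $(s+1)$-minors are $u$-multiples of positive $s$-minors, which raises the Sperner number while Lemma \ref{lem:Plucker} preserves mixed HRR$_i$ on $U$.
\end{itemize}
Statement (2) is then recovered from (1) by taking limits.
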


\begin{proof}	
	(1) $\Rightarrow$ (2).  Assume that $F\in L(i)_d$ is $i$-Lorentzian.  Then, by definition, there exists a sequence $\{F_n\}\subset \Lor(i)_d$ of strictly $i$-Lorentzian polynomials such that $\lim_{n\to\infty}F_n=F$.  By \cite[Theorem 1]{MMS} (or more specifically, \cite[Proposition 4.8]{MMS}), the Toeplitz matrices $\phi^i_d(F_n)$ must be totally positive for all $n$, and hence by Lemma \ref{lem:super}, so then are the matrices $\phi^j_d(F_n)$ for all $n$ and for all $j$, $0\leq j\leq i$.  By continuity we have $\lim_{n\to\infty}\phi^j_d(F_n)=\phi^j_d(F)$,and hence it follows that $\phi^j_d(F)$ must be totally nonnegative for every $0\leq j\leq i$, which is (2).
	
	(2) $\Rightarrow$ (3).  Assume that $\phi^i_d(F)$ is strongly totally nonnegative.  Then, in particular, for each $j$, with $0\leq j\leq i$, every maximal minor of $\phi^j_d(F)$ is totally nonnegative, and hence it follows from Lemma \ref{lem:Plucker} that  
	$$\det\left(P_j\cdot\operatorname{MHess}_j(F,\mathcal{E})|_{(\underline{X},\underline{Y})}\right)=\sum_{K\in\binom{[d-j]_0}{j+1}}\Delta_K(\phi^j_d(F))\cdot\left(\sum_{\mathcal{P}\colon \mathcal{A}_{K+j}\rightarrow\mathcal{B}_J}\prod_{z=1}^{d-2j}X_z^{a_z(\mathcal{P})}\cdot Y_z^{b_z(\mathcal{P})}\right)>0,$$
	for all $(\underline{X},\underline{Y})=(X_1,\ldots,X_{d-2j},Y_1,\ldots,Y_{d-2j})>0$, and for each $j$ with $0\leq j\leq \min\{i,s-1\}$.
	Therefore it follows from \cite[Lemma 3.13]{MMS} that the algebra $A_F$ must satisfy mixed HRR$_i$ on $U$, which is (3).

	(3) $\Rightarrow$ (1).  Fix $i$ with $0\leq i\leq \flo{d}$.  We will show that if $A_F$ satisfies the mixed HRR$_i$ on $U$, then  $F\in L(i)_d$, by downward induction on the Sperner number $s=s(F)$.  For the base case, assume that  $i+1\leq s(F)$, so that rank of $\phi^i_d(F)$ is $i+1$, by \cite[Lemma 4.5]{MMS}.  Then note that $\ell_1=x+ty$ and $\ell_2=tx+y$ are linearly independent and in $U$ for all $t$, with $0<t<1$.  It follows from \cite[Corollary 4.13]{MMS} that $G_t(X,Y)=F(X+tY,tX+Y)$ is strictly $i$-Lorentzian.  Since $\lim_{t\to 0}G_t=F$, it follows that $F\in L(i)_d$ which completes the base case.
	
	For the inductive step, assume the result holds for all homogeneous $d$-forms $H$ with Sperner number satisfying $i+1\geq s(H)>s$, and let $F\in Q_d$ be a homogeneous $d$-form with Sperner number $s(F)=s$, and such that $A_F$ satisfies the mixed HRR$_i$ on $U$. Note that $G_t=F(X+tY,tX+Y)$ from above cannot be strictly $i$-Lorentzian because $s(G_t)-1=s-1< i$.  On the other hand, since the algebra $A_F$ satisfies the mixed HRR$_i$ on $U$, it follows that the isomorphic algebra $A_{G_t}$ satisfies mixed HRR$_i$ (and hence also mixed HRR$_{s-1}$) on the standard closed cone $\overline{U}$; this argument is implicit in the proof of \cite[Corollary 4.13]{MMS}.  It therefore follows from \cite[Theorem 1]{MMS} that $\phi^{s-1}_d(G_t)$ is totally positive, and hence by Lemma \ref{lem:super} as well as Lemma \ref{lem:TPs}, that $\phi^{j}_d(G_t)$ is either totally positive if $0\leq j\leq s-1$ or TP$_s$ and totally nonnegative if $s\leq j\leq i$.  In order to invoke our induction hypothesis, we will find another family of polynomials converging to $G_t$ with similar positivity properties.  
	
	To this end, let $u$ be another parameter and define the two-parameter family of polynomials by 
	$$H_{t,u}=G_t+(-1)^suY^d.$$
	Then for all $j$, with $0\leq j\leq\flo{d}$, we have 
	$$\phi^j_d(H_{t,u})=\phi^j_d(G_t)+(-1)^suE_{0,d-2j}$$
	where $E_{0,d-j}$ is the $(j+1)\times (d-j+1)$ elementary matrix with $1$ in the $(0,d-j)$ entry (upper right corner) and zeros elsewhere.  It follows that for any $k$, with $1\leq k\leq j+1$, and for any $k$-subsets $A\subset\{0,\ldots,j\}$ and $B\subset\{0,\ldots,d-j\}$, we have  
	$$\Delta_{AB}\left(\phi^j_d(H_{t,u})\right)=\begin{cases} \Delta_{AB}\left(\phi^j_d(G_t)\right) & \text{if} \ 0\notin A \ \text{or} \ d-j\notin B\\
		\Delta_{AB}\left(\phi^j_d(G_t)\right)+(-1)^{s+k-1}u\Delta_{(A\setminus 0) (B\setminus d-j)}\left(\phi^j(G_t)\right) & \text{if} \ 0\in A \ \text{and} \ d-j\in B\\ \end{cases}$$
	In particular it is clear that for all $0<t<1$ and ${u>0}$ sufficiently small (possibly depending on $t$), $\phi^j_d(H_{t,u})$ is totally positive for $0\leq j\leq s-1$, and TP$_s$ and totally nonnegative for $s\leq j\leq i$, since $\phi^j_d(G_t)$ is.  For $j$ satisfying $s\leq j\leq i$, and $k=s+1$, the $k\times k$ minor $\Delta_{AB}(\phi^j_d(H_{t,u}))$ is either zero, corresponding to the $(s+1)\times (s+1)$ minor $\Delta_{AB}(\phi^j_d(G_t))$ or else it is positive, being a $u$-multiple of the $s\times s$ minor $\Delta_{(A\setminus 0)(B\setminus d-j)}(\phi^j_d(G_t))$.  In particular, the matrix $\phi^s_d(H_{t,u})$ is totally nonnegative of rank $s+1$.  Moreover, since for $k>s+1$, all $k\times k$ minors $\Delta_{AB}(\phi^j_d(H_{t,u}))$ vanish, it follows that the matrix $\phi^{s+1}_d(H_{t,u})$ does not have maximal rank, and hence by \cite[Lemma 4.5]{MMS}, the Sperner number of $H_{t,u}$ is $s(H_{t,u})=s+1$.  In order to invoke our induction hypothesis, it remains show that the algebra $A_{H_{t,u}}$ satisfies mixed HRR$_i$ on $U$.

%	For ${j=s}$, the maximal minors (of size $k=s+1$) of $\phi^s_d(H_{t,u})$ are either zero, corresponding to the maximal minors of $\phi_d^s(G_t)$, or else they are positive, being a $u$-multiple of an $s\times s$ minor of the TP$_s$ matrix $\phi^{s}_d(G_t)$.  In particular, the maximal minors of the Toeplitz matrix $\phi^s_d(H_{t,u})$ are nonnegative, with at least one of them being positive.   Moreover, since the maximal minors (of size $k=s+2$) of $\phi^{s+1}_d(H_{t,u})$ are all zero it follows that the Sperner number of $H_{t,u}$ satisfies $s(H_{t,u})=s+1$.  Hence in order to invoke our induction hypothesis, it remains to show that the algebra $A_{H_{t,u}}$ satisfies mixed HRR$_i$ on $U$.   
	%Since the minors of the $(j+1)\times (j+1)$ matrix $\phi^j_d(H_{t,u})$ are also minors of the matrix $\phi^i_d(H_{t,u})$ for all $j$, with $0\leq j\leq i$, it follows that $\operatorname{rk}\bigl(\phi^i_d(H_{t,u})\bigr)=s+1$, and hence our induction hypothesis applies.  It remains to show that the oriented AG algebra $A_{H_{t,u}}$ satisfies mixed HRR$_i$ on $U$.  
	We proceed as above:  fix $j$, with $0\leq j\leq \min\{i,s(H_{t,u})-1\}=s$.  Then since $\phi^j_d(H_{t,u})$ is either totally positive, if $0\leq j<s$, or totally nonnegative with at least one nonvanishing maximal minor, if $j=s$, it follows from Lemma \ref{lem:Plucker} that
	$$\det\left(P_j\cdot\operatorname{MHess}_j(H_{t,u},\mathcal{E})|_{(\underline{X},\underline{Y})}\right)=\sum_{K\in\binom{[d-j]_0}{j+1}}\Delta_K(\phi^j_d(H_{t,u}))\cdot\left(\sum_{\mathcal{P}\colon \mathcal{A}_{K+j}\rightarrow\mathcal{B}_J}\prod_{z=1}^{d-2j}X_z^{a_z(\mathcal{P})}\cdot Y_z^{b_z(\mathcal{P})}\right)>0,$$
	for all $(\underline{X},\underline{Y})>0$.  Therefore it follows from \cite[Lemma 3.13]{MMS} that the algebra $A_{H_{t,u}}$ satisfies mixed HRR$_i$ on $U$.  
	%From Lemma 4.19 and the generalized Cauchy-Binet formula we have
	%\begin{align*}
	%	\det\left(P_j\operatorname{MHess}_j(H_{t,u},\mathcal{E})|_{\underline{C}}\right)= &  \sum_{K\in  \binom{\{j,\ldots,d\}}{j+1}} \det\left(\phi_d(H_{t,u})_{IK}\right)\det\left(W_{d-2j}(\underline{a},\underline{b})_{KJ}\right)\\
	%	= & \sum_{K\in\binom{\{j,\ldots,d\}}{j+1}}\det\left(\phi^j_d(H_{t,u})_{IK-j}\right)\det\left(W_{d-2j}(\underline{a},\underline{b})_{KJ}\right)
	%\end{align*}
	%where $I=\{0,\ldots,j\}$, $J=\{j,\ldots,2j\}$, and the sum is over all $(j+1)$-subsets $K\subset \{j,\ldots,d\}$, and $K-j\subset\{0,\ldots,d-j\}$ is the shifted set, indexing the columns in $\phi^j_d(H_{t,u})$.  In particular the determinant of the $j^\text{th}$ permuted mixed Hessian is a linear combination of all maximal minor determinants of $\phi^j_d(H_{t,u})$, which we have already showed are all nonnegative with at least one strictly positive.  It follows from Lemma 4.18(1) that for each    $\underline{C}=\left(\underline{a},\underline{b}\right)\in\left(\R^2_{>0}\right)^{d-2j}$, we have $\det\left(W_{d-2j}(\underline{a},\underline{b})\right)>0$, and hence that $\det\left(P_j\operatorname{MHess}_j(F,\mathcal{E})|_{\underline{C}}\right)>0$.  Since this holds for each $j$, with $0\leq j\leq \min\{i,s(H_{t,u})-1\}$, this implies that $A_{H_{t,u}}$ satisfies mixed HRR$_i$ on $U$ by Lemma 3.13.  
	Therefore by our induction hypothesis, $H_{t,u}$ is $i$-Lorentzian for all $0<t<1$ and $u=u(t)>0$ sufficiently small.  Finally, since $\lim_{(t,u)\to (0,0)}H_{t,u}=F$ it follows that $F$ is $i$-Lorentzian as well, which is (1).
\end{proof}

\section{Proof of Theorem \ref{thm:STN=TN}}
\label{sec:Theorem2}
We are now in a position to prove Theorem \ref{thm:STN=TN}.

We shall appeal to Theorem \ref{thm:Lorentzian21}, in particular the equivalence:
$$\phi^i_d(F) \ \text{is strongly totally nonnegative} \ \Leftrightarrow \ A_F \ \text{satisfies mixed HRR$_i$ on $U$}.$$

For convenience of the reader, we state Theorem \ref{thm:STN=TN} again:
\begin{theorem}
	\label{thm:TNNSTNN}
	If $\phi^i_d(F)$ is totally nonnegative then $A_F$ satisfies mixed HRR$_i$ on $U$.  In particular, $\phi^i_d(F)$ is totally nonnegative if and only if $\phi^i_d(F)$ is strongly totally nonnegative.
\end{theorem}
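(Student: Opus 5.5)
The plan is to prove the first assertion directly and then deduce the ``in particular'' from Theorem \ref{thm:Lorentzian21}. If $\phi^i_d(F)$ is totally nonnegative and $A_F$ satisfies mixed HRR$_i$ on $U$, then (3)$\Rightarrow$(2) of Theorem \ref{thm:Lorentzian21} gives strong total nonnegativity. The converse is trivial.

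By \cite[Lemma 3.13]{MMS}, the first assertion reduces to showing
\[
\det\left(P_r\cdot\operatorname{MHess}_r(F,\mathcal{E})|_{(\underline{X},\underline{Y})}\right)>0
\]
for all $(\underline{X},\underline{Y})>0$ and all $0\leq r\leq\min\{i,s-1\}$. By Lemma \ref{lem:Plucker}, this determinant equals $\sum_K\Delta_K(\phi^r_d(F))\,\Delta_{(K+r)J}(W_{d-2r})$, and each path polynomial $\Delta_{(K+r)J}(W_{d-2r})$ is a nonzero polynomial with nonnegative coefficients. So everything hinges on the sign pattern of the maximal minors of $\phi^r_d(F)$ for $r<i$; for $r=i$ these minors are nonnegative by hypothesis and at least one is nonzero because $r\leq s-1$. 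I would normalize so that $c_a>0$, with $a=a(F)$. Since $\phi^i_d(F)$ is totally nonnegative and $c_a$ is a $1\times 1$ minor, this is automatic. The case $r\leq a$ is degenerate and should be handled directly: the relevant Toeplitz matrices are then triangular-like with diagonal $c_a$.

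For $a<r<i$, I would argue by contradiction using the $\alpha^i$-statistic. Suppose some maximal minor of $\phi^r_d(F)$ is negative, and choose such a $K_0$ with $\alpha^i(K_0)$ \emph{minimal} among the negative ones. Suppose first that $K_0$ falls in the range of Lemma \ref{lem:alpha}, namely $i\leq k_r\leq d-r$. Then there are $I,J$ with
\[
0\leq\Delta_{IJ}(\phi^i_d(F))=\Delta_{K_0}(\phi^r_d(F))+\sum_{\alpha^i(K')<\alpha^i(K_0)}c\cdot\Delta_{K'}(\phi^r_d(F)).
\]
All terms in this sum have smaller $\alpha^i$ and are therefore nonnegative by minimality, so this bounds $\Delta_{K_0}$ only from below by a quantity that may itself be negative. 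This one-sided comparison is insufficient on its own, and I expect it to be the main obstacle.

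To overcome it, I would pair the inequality with the specialization of Lemma \ref{lem:Specialization}: set $Y_1=\cdots=Y_{i-r}=t$ and all other variables equal to $1$. Then the mixed Hessian determinant becomes a univariate polynomial $p(t)=\sum_K\Delta_K(\phi^r_d(F))\,\Delta^i_{(K+r)J}(t)$, and each term vanishes to order exactly $\alpha^i(K)$ at $t=0$. The key step is to invert the Littlewood--Richardson relation of Lemma \ref{lem:LRr}. That relation is unitriangular with respect to $\alpha^i$ by Lemma \ref{lem:alpha}, so after inversion $p(t)$ should be expressible as $\sum_{(I,J)}\Delta_{IJ}(\phi^i_d(F))\,q_{IJ}(t)$. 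I would then need to show that the $q_{IJ}$ have nonnegative coefficients for $t>0$. I would attempt this by a Lindstr\"om--Gessel--Viennot argument: interpret $q_{IJ}$ as counting path systems in which the first $i-r$ diagonals are ``frozen'' into the East-first configuration of Lemma \ref{lem:Specialization}. That configuration is precisely what converts an $(r+1)$-minor of $\phi^i$ into the skew Schur function $s_{\lambda/(\mu+a)}$.

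Granting this, $p(t)\geq0$ for all $t>0$, and by continuity and a homogeneity rescaling the same holds on the whole positive orthant. Strict positivity would then follow from the nonzero maximal minor of $\phi^i_d(F)$, which contributes through the minimal-$\alpha$ term (with Littlewood--Richardson coefficient $1$ by Lemma \ref{lem:alpha}). Finally, \cite[Lemma 3.13]{MMS} gives mixed HRR$_i$ on $U$.
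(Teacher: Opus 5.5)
Your reduction via Lemma~3.13 of \cite{MMS} and Lemma~\ref{lem:Plucker} is fine, and so are the case $r=i$ and the deduction of the ``in particular'' from Theorem~\ref{thm:Lorentzian21}. The heart of the argument, however, is missing.

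\textbf{The key step is assumed, not proved.} You grant that, after inverting the unitriangular system of Lemma~\ref{lem:alpha}, every coefficient $q_{IJ}(t)$ is nonnegative for all $t>0$. Inverting a unitriangular system with nonnegative Littlewood--Richardson entries produces entries of both signs. So each $q_{IJ}(t)$ is $t^{\alpha^i(K)}\hat{\Delta}^i_{(K+r)J}(t)$ plus a \emph{signed} integer combination of path polynomials of strictly larger $t$-order. The ``frozen'' Lindstr\"om--Gessel--Viennot picture is only a hope, and nothing in it controls these negative corrections for $t$ away from $0$.

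What unitriangularity does give for free is $q_{IJ}(t)>0$ for all sufficiently small $t>0$. Hence $p(t)>0$ near $t=0$, provided one of the $(r+1)\times(r+1)$ minors of $\phi^i_d(F)$ in your expansion is nonzero. That follows from $\operatorname{rank}\phi^r_d(F)=r+1$. It does not follow from a ``nonzero maximal minor of $\phi^i_d(F)$'': those all vanish when $i\geq s$, and in any case they are not $(r+1)\times(r+1)$ minors. This small-$t$ positivity is exactly what the paper extracts, more directly. It takes $K$ with $\alpha^i(K)$ minimal among the \emph{nonzero} maximal minors, rather than the negative ones. Then all lower terms in Lemma~\ref{lem:alpha} vanish, the one-sided inequality you worried about becomes the equality $\Delta_{IJ}(\phi^i_d(F))=\Delta_K(\phi^r_d(F))$, and this is $>0$.

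\textbf{The extension to the whole orthant fails.} Even granting $p(t)\geq 0$ for every $t>0$, the passage to the positive orthant ``by continuity and a homogeneity rescaling'' does not work. Each path-system monomial has degree exactly $r+1$ in each pair $(X_z,Y_z)$. So rescaling multiplies the determinant by a positive factor and leaves the $d-2r$ ratios $Y_z/X_z$ unchanged; the sign is a function of these ratios. Your curve only realizes ratio vectors of the form $(t,\ldots,t,1,\ldots,1)$, which is a thin subset of $\R^{d-2r}_{>0}$.

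\textbf{The missing idea.} The paper first shows that the permuted mixed Hessian determinants \emph{never vanish} on the connected positive orthant, so that one positive point suffices. This comes from induction on $d$:
\begin{enumerate}
\item $\phi^i_{d-1}(x\circ F)$ and $\phi^i_{d-1}(y\circ F)$ are $d$ times column-submatrices of $\phi^i_d(F)$, hence totally nonnegative. By induction, $A_{x\circ F}$ and $A_{y\circ F}$ satisfy mixed HRR$_i$ on $U$.
\item Suppose $\ell_1\cdots\ell_{d-2j}\cdot\alpha\circ F=0$ with $\ell_1=ax+by$, $a,b>0$. The Hodge--Riemann signs in the two derivative algebras force $\alpha=0$ in both.
\item Hence $\alpha$ lies in the socle of $A_F$, so it is zero. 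This gives mixed SL$_i$, i.e.\ nonvanishing on the orthant.
\end{enumerate}
Nonvanishing together with positivity for small $t$ then gives positivity everywhere. Without such a nonvanishing statement, positivity near $t=0$ on your curve cannot be propagated.
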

\begin{proof}
	The second statement follows from the first statement and Theorem \ref{thm:Lorentzian21}.
	To prove the first statement, we use induction on $d=\deg(F)$, the base case being clear.  Assume that $\phi^i_d(F)$ is totally nonnegative.  Then so are the submatrices $\frac{1}{d}\phi^i_{d-1}\left(x\circ F\right)$ and $\frac{1}{d}\phi^i_{d-1}\left(y\circ F\right)$.  By induction, it follows that the algebras $A_{x\circ F}$ and $A_{y\circ F}$ both satisfy mixed HRR$_i$ on $U$.  We claim that this implies that the algebra $A_F$ must satisfy mixed SL$_i$ on $U$.  Indeed, fix a sequence of linear forms in $U$, say $\mathcal{L}=(\ell_0,\ell_1,\ldots,\ell_d)\in U^{d+1}$, and suppose that for some $j$ satisfying $0\leq j\leq \min\{i,s(F)-1\}$, there exists $\alpha\in R_j$ satisfying 
	\begin{equation}
		\label{eq:Lj}
		\ell_1\cdots\ell_{d-2j}\cdot\alpha\circ F\equiv 0.
	\end{equation}
	We shall abuse notation slightly and let $\alpha$ also denote its image in the quotient ring $A_F$, or $A_{x\circ F}$ or $A_{y\circ F}$, when it is clear from the context what we mean.  
	Then multiplying both sides of Equation \eqref{eq:Lj} by either $x$ or $y$ then yields 
	$$\ell_1\cdots\ell_{d-2j}\cdot\alpha\circ\left(x\circ F\right)=0=\ell_1\cdots\ell_{d-2j}\cdot \alpha\circ\left(y\circ F\right).$$
	On the other hand, taking $$\mathcal{L}'=(\ell_0'=\ell_1,\ldots,\ell'_{d-1-2j}=\ell_{d-2j},\ell_{d-2j}'=\ell_{d-2j+1},\ldots,\ell_{d-1}'=\ell_d)$$
	we see that $\alpha$ must be primitive in the algebras $A_{x\circ F}$ and $A_{y\circ F}$ with respect to $\mathcal{L}'\in U^{(d-1)+1}$.  Since these algebras satisfy HRR$_i$ on $U$, it follows that if $\alpha\neq 0$ in $A_{x\circ F}$ then
	$$(-1)^j\cdot \ell_2\cdots\ell_{d-2j}\cdot\alpha\circ\left(x\circ F\right)=(-1)^j\cdot \ell_1'\cdots\ell_{d-1-2j}\cdot\alpha\circ\left(x\circ F\right)>0$$
	and similarly, if $\alpha\neq 0$ in $A_{y\circ F}$.  Therefore, writing $\ell_1=ax+by$ for some $a,b>0$, we have 
	\begin{align*}
		0= & (-1)^j\cdot \left(\ell_1\cdots\ell_{d-2j}\cdot\alpha\circ F\right)\\
		= &  (-1)^j\left(a\cdot \ell_2\cdots\ell_{d-2j}\cdot\alpha\circ \left(x\circ F\right)\right)\\
		& +(-1)^j\left(b\cdot \ell_2\cdots\ell_{d-2j}\cdot\alpha\circ\left(y\circ F\right)\right).
	\end{align*}
	In particular, $\alpha$ must be zero in both algebras $A_{x\circ F}$ and $A_{y\circ F}$.  This implies that $x\cdot \alpha=0$ and $y\cdot\alpha=0$ are both zero in the algebra $A_F$ and hence $\alpha\in \left(A_F\right)_j$ must either be zero in $A_F$ or it must be in the socle.  Since $j<d$ by our assumption, it follows that $\alpha$ must be zero in $A_F$, and hence the map $\times\ell_1\cdots\ell_{d-2j}\colon \left(A_F\right)_j\rightarrow\left(A_F\right)_{d-j}$ must be injective.  Since $\mathcal{L}\in U^{d+1}$ and $j$, $0\leq j\leq \min\{i,s(F)-1\}$ were chosen arbitrarily, it follows that $A_F$ must satisfy mixed SL$_i$ on $U$, as claimed.  
	
	It follows that for each $j$, $0\leq j\leq \min\{i,s(F)-1\}$, the permuted $j^{th}$ mixed Hessian determinant must be nonzero everywhere on the positive cone, i.e.
	$$\det\left(P_j\cdot \operatorname{MHess}_j(F,\mathcal{E})|_{(\underline{X},\underline{Y})}\right)\neq 0,$$
	for all $\left(\underline{X},\underline{Y}\right)>0$.
	Therefore, in order to conclude the induction, it will suffice to find, for each $0\leq j\leq \min\{i,s(F)-1\}$, \emph{some} point $\left(\underline{X},\underline{Y}\right)=\left(X_1,\ldots,X_{d-2j},Y_1,\ldots,Y_{d-2j}\right)>0$
	in the positive cone at which the permuted $j^{th}$ mixed Hessian determinant is positive.
	
	To find such a point, fix $j$ satisfying $0\leq j\leq \min\{i,s(F)-1\}$, and assume for the moment that $j<i$.  Then in this case we can specialize the variables $Y_1,\ldots,Y_{i-j}=t$ and the remaining variables to $X_1=\cdots=X_{d-2j}=Y_{i-j+1}=\cdots=Y_{d-2j}=1$, as in Lemma \ref{lem:Specialization}; in particular note that for any fixed $t>0$, this defines a point in the positive cone.  Then according to Lemma \ref{lem:Plucker} and Lemma \ref{lem:Specialization}, we can write the specialized permuted $j^{th}$ mixed Hessian determinant as the univariate polynomial
	$$M_j^F(t)=\sum_{K\in\binom{[d-j]_0}{j+1}}\Delta_K\left(\phi^j_d(F)\right)\cdot \Delta^i_{(K+j)J}(t)=\sum_{K\in\binom{[d-j]_0}{j+1}}\Delta_K\left(\phi^j_d(F)\right)\cdot t^{\alpha^i(K)}\cdot \hat{\Delta}^i_{(K+j)J}(t)$$
	where $\hat{\Delta}^i_{(K+j)J}(0)\neq 0$.
	Let $K=\{0\leq k_0<\cdots<k_j\leq d-j\}$ be any subset where $\Delta_K(\phi^j_d(F))$ is nonzero and $\alpha^i(K)$ is minimal.  If $i\leq k_j\leq d-j$, then according to Lemma \ref{lem:alpha}, there exists row and column index sets $I\subset\{0,\ldots,i\}$ and $J\subset\{0,\ldots,d-i\}$ such that  
	$$\Delta_{IJ}\left(\phi^i_d(F)\right)=\Delta_K\left(\phi^j_d(F)\right);$$
	of course in general, other terms may appear, but, according to Lemma \ref{lem:alpha} and our minimality assumptions on $\alpha^i(K)$, those other terms must vanish.
	Since $\phi^i_d(F)$ is totally nonnegative, it follows that $\Delta_K(\phi^j_d(F))$ must be strictly positive.  On the other hand, if $j\leq k_j\leq i-1$, then we can write
	$$\Delta_K(\phi^j_d(F))=\frac{1}{d^{j+1}}\cdot\Delta_K\left(\phi^j_{d-1}\left(y\circ F\right)\right).$$
	Since $A_{y\circ F}$ satisfies mixed HRR$_i$ on $U$, it follows from Theorem \ref{thm:Lorentzian21} that $\phi^i_{d-1}(y\circ F)$ is strongly totally nonnegative and hence $\phi^j_{d-1}\left(y\circ F\right)$ is totally nonnegative, and hence again, we must have that $\Delta_K\left(\phi^j_d(F)\right)>0$ is strictly positive.
	This shows that if $j<i$ and $0\leq j\leq \min\{i,s(F)-1\}$, then the univariate polynomial
	$M^F_j(t)$ has a positive coefficient on its lowest degree term, and hence $M^F_j(t)>0$ for $t>0$ sufficiently small.  If $j=i$, then we must have $\min\{i,s(F)-1\}=i$, and $\phi^i_d(F)$ is a full rank Toeplitz matrix, by \cite[Lemma 4.5]{MMS}.  It then follows directly from the Pl\"ucker expansion formula in Lemma \ref{lem:Plucker} that, since we are assuming $\phi^i_d(F)$ is totally nonnegative and of full rank, the permuted $i^{th}$ mixed Hessian determinant must be strictly positive on the positive cone, i.e. 
	$$\det\left(P_i\cdot\operatorname{MHess}_i(F,\mathcal{E})|_{(\underline{X},\underline{Y})}\right)=\sum_{K\in\binom{[d-i]_0}{i+1}}\Delta_K(\phi^i_d(F))\cdot\left(\sum_{\mathcal{P}\colon \mathcal{A}_{K+i}\rightarrow\mathcal{B}_J}\prod_{z=1}^{d-2i}X_z^{a_z(\mathcal{P})}\cdot Y_z^{b_z(\mathcal{P})}\right)>0,$$
	for all $(\underline{X},\underline{Y})>0$.
	In all cases, we must therefore conclude that permuted mixed Hessian polynomials are strictly positive on the positive cone up through degree $i$, and hence $A_F$ must therefore satisfy the mixed HRR$_i$ on $U$ by \cite[Lemma 3.13]{MMS}, completing the induction and the proof. 
\end{proof}

\begin{remark}
	\label{rem:Whitney2}
	\begin{enumerate}
		%	\item It is interesting to compare Theorem \ref{thm:Cor} with A. Whitney's theorem (\cite[Reference 42]{MMS}), which says 
		%$$\overline{\mathcal{M}(m,n)^{>0}}=\mathcal{M}(m,n)^{\geq 0}$$
		%or even the more recent theorem for Hankel matrices, proved by S. Fallat, C. Johnson, and A. Sokal (\cite[References 13,14]{MMS}), which says 
		%$$\overline{\mathcal{H}(m,n)^{>0}}=\mathcal{H}(m,n)^{\geq 0}.$$
		
		%It remains an open question whether or not the set of strongly totally nonnegative Toeplitz matrices equals the set of totally nonnegative Toeplitz matrices, i.e.
		%$$\overline{\mathcal{T}(m,n)^{>0}}=\mathcal{T}(m,n)^{\geq\geq 0}\overset{?}{=}\mathcal{T}(m,n)^{\geq 0}.$$
		%The equality in question holds for $m\leq 3$.
		
		%	Evidently, however, the Toeplitz matrices behave differently, as their totally nonnegative matrices can have isolated points, as in Example \ref{ex:Not}.
		
		\item 
		Theorem \ref{thm:Lorentzian2} together with Theorem \ref{thm:STN=TN} implies that for each $i$, $0\leq i< \flo{d}$, there is a descending inclusion 
		$$L(i+1)_d\subset L(i)_d;$$
		indeed if $\phi^{i+1}_d(F)$ is totally nonnegative then it must be strongly totally nonnegative and hence $\phi^i_d(F)$ is totally nonnegative too.  In particular, $L\left(\flo{d}\right)_d$ consists of polynomials that are $i$-Lorentzian for all $0\leq i\leq \flo{d}$, i.e.
		$$L\left(\flo{d}\right)_d=\bigcap_{0\leq i\leq \flo{d}}L(i)_d.$$ 
		One subfamily of $\flo{d}$-Lorentzian polynomials are those $d$-forms $F=\sum_{k=0}^d\binom{d}{k}c_kX^kY^{d-k}$ that are called normally stable \cite[Definition 4.25]{MMS}, characterized as those having totally nonnegative bi-infinite Toeplitz matrices $\phi_d(F)=\left(c_{q-p}\right)_{0\leq p,q}$, or alternatively, as those having dehomogenized normalizations $\tilde{F}(1,t)=\sum_{k=0}^dc_kt^{d-k}$ that have only real non-positive roots; this is \cite[Theorem 4 or Theorem 4.30]{MMS}.
		
		\item If $F\in L(s-1)_d$ where $s=s(F)$ is the Sperner number of $F$, then $F\in L\left(\flo{d}\right)_d$.  To see this, note
		that since the algebra $A_F$ satisfies mixed HRR$_{s-1}$ on $U$, it must therefore satisfy mixed HRR$_{\flo{d}}$ on $U$ since the primitive subspaces vanish in degrees $s\leq i\leq \flo{d}$.  It follows from Theorem \ref{thm:Lorentzian21} that $F\in L\left(\flo{d}\right)_d$ as well.  In terms of Toeplitz matrices, this says that if $\phi^{s-1}_d(F)$ is totally nonnegative, then so is $\phi^{\flo{d}}_d(F)$; this is the totally nonnegative analogue of Lemma \ref{lem:TPs}.  
	\end{enumerate}       
\end{remark}

\section{Other Errata}
\label{sec:Other}
Here are some other errata from \cite{MMS}.
	\begin{enumerate}
		\item page 21, proof of Corollary 4.13: the word ``precise'' should be replaced by the word ''explicit''
		\item page 25, the proof of Lemma 4.19:  there should be a $\frac{1}{d!}$ on the last line of the displayed equation.
		\item page 25-26, the proof of Theorem 4.21 contains the following false statements:
		\begin{enumerate}
			\item on page 25, the statement, after the last displayed equation ``...the minors of $\phi_d(F)_{IK}$ for $I=\{0,...,j\}$ and $K\subset \{j,...,d\}$ are precisely the consective maximal minors of $\phi^j_d(F)$, or the consecutive initial minors of $\phi^i_d(F)$ of size $j+1$...'' is false.
			\item on page 26, the statement after the third displayed equation
			``Since the minors of the $(j+1)\times (j+1)$ matrix $\phi^j_d(H_{t,u})$ are also minors of the matrix $\phi^i_d(H_{t,u})$ for all $j$,with $0\leq j\leq i$...'' is false.
		\end{enumerate}
		These statements have been corrected in our proofs of Theorem \ref{thm:Lorentzian21} and Theorem \ref{thm:TNNSTNN}.
		%\item Every instance of totally nonnegative Toeplitz matrix in connection to limits of totally positive Toeplitz matrices, or $i$-Lorentzian polynomials, or mixed HRR$_i$ on $U$, should be replaced by the term strongly totally nonnegative defined above.  These include 
		%\begin{itemize}
		%	\item page 1, abstract, lines 4, 7
		%	\item page 2, introduction, lines -2, -7
		%	\item page 3, Theorem 2, item 2
		%	\item page 3, Theorem 3, line 2
		%	\item page 3, line -7
		%	\item page 22, line -7
		%	\item page 28, after Remark 4.29, line 1
		%\end{itemize}  
	\end{enumerate}

\bigskip

%\bibliographystyle{amsalpha}
%\bibliographystyle{plain}
%\bibliography{Lorentzianreferences.bib}

\end{document}